\title{Pathwise Solutions of the 2-D Stochastic Primitive Equations}
\author{Nathan Glatt-Holtz and Roger Temam\\
  \scriptsize{Department of Mathematics and }\\
  \scriptsize{The Institute for Scientific Computing and Applied Mathematics}\\
  \scriptsize{Indiana University, Bloomington}\\
  \scriptsize{emails: negh@indiana.edu, temam@indiana.edu}}
\date{}
\definecolor{Red}{rgb}{0.7,0,0.1}
\definecolor{Green}{rgb}{0,0.7,0}
\numberwithin{equation}{section}
\newtheorem{Thm}{Theorem}[section]
\newtheorem{Lem}{Lemma}[section]
\newtheorem{Prop}{Proposition}[section]
\newtheorem{Def}{Definition}[section]
\newtheorem{Rmk}{Remark}[section]
\newcommand{\pd}[1]{\partial_{#1}}
\newcommand{\indFn}[1]{1 \! \! 1_{#1}}
\newcommand{\E}{\mathbb{E}}
\newcommand{\Prb}{\mathbb{P}}
\DeclareMathOperator{\esssup}{ess\,sup}
\begin{document}
\markboth{N. Glatt-Holtz and R. Temam} {Pathwise Solutions
of the 2-D Stochastic Primitive Equations}

\maketitle

{\small
\begin{center}
	Dedicated to Alain Bensoussan on the occasion of his 70th birthday.
\end{center}
}

\begin{abstract}
  In this work we consider a stochastic version of the Primitive
  Equations (PEs) of the ocean and the atmosphere and establish the existence and
  uniqueness of pathwise, strong solutions. The analysis employs novel
  techniques in contrast to previous works \cite{EwaldPetcuTemam},
  \cite{GlattHoltzZiane1} in order to handle
  a general class of nonlinear noise structures and to
  allow for physically relevant boundary conditions.  The proof relies
  on Cauchy estimates, stopping time arguments and anisotropic
  estimates.
\end{abstract}

\section{Introduction}
\label{sec:introduction}

The Primitive Equations (PEs) are widely regarded as a fundamental
description of geophysical scale fluid flows. They provide the
analytical core of large General Circulation Models (GCMs) that are
at the forefront of numerical simulations of the earth's ocean and
atmosphere (see e.g. \cite{Trenberth1}). In view of the wide
progress made in computation the need has appeared to better
understand and model some of the uncertainties which are contained
in these GCMs. This is the so called problem of
``parameterization''. Besides all of the physical forms of
parameterization \cite{Trenberth1, ColmanPotter2,ColmanPotter1}
, stochastic modeling has appeared as one of the
major modes in the contemporary evolution of the field
(see \cite{EP09, PenlandSardeshmukh, PenlandEwald1,
Rose1, LeslieQuarini1, MasonThomson,
BernerShuttsLeutbecherPalmer,
ZidikheriFrederiksen}
and also \cite{GlattHoltzTemamTribbia1}). In
this context there is a clear need to better understand the
numerical and analytical underpinnings of stochastic partial
differential equations.

In the present article we will establish the global well-posedness
of the stochastically forced Primitive Equations of the ocean in
dimension two.  While this system has been treated in a simplified
form in previous works, for the case of additive noise
\cite{EwaldPetcuTemam} and nonphysical boundary conditions
\cite{GlattHoltzZiane1}, our aim here is to go further and treat a
more physically realistic version of these equations in the context
of a multiplicative noise. In the formulation herein we face two new
fundamental difficulties in contrast to previous work. Firstly, due
to the imposed boundary conditions we lose higher order cancelations
in the nonlinear terms. This complicates the a priori estimates
which in turn prevent the usage of more direct compactness arguments
adopted in, \cite{Breckner}, \cite{GlattHoltzZiane1}.   On the other
hand, due to the nonlinear multiplicative noise structure, the system may not be
transformed into a random PDE as in \cite{EwaldPetcuTemam}. For this
reason we are not able to treat the probabilistic dependence as a
parameter in the problem. The analysis therefore requires the usage
of advanced tools both from stochastic analysis, namely continuous
time martingale theory and stopping time arguments, and PDE theory
which we treat in detail in a separate work \cite{GlattHoltzTemam1}.

A significant literature exists concerning the
Navier-Stokes equations driven by a multiplicative
volumic white noise forcing.  See 
\cite{BensoussanTemam, Viot1,Cruzeiro1,
CapinskiGatarek, Flandoli1, MikuleviciusRozovskii4,
ZabczykDaPrato2, Breckner,
BensoussanFrehse, BrzezniakPeszat,
MikuleviciusRozovskii2,
GlattHoltzZiane2}.  While our point of view
is similar to some of these works we would
like to point out that the Primitive Equations,
not withstanding very recent results on global
well-posedness in 3D,
are technically more involved than the Navier-
Stokes equations. 

This article is dedicated to Alain Bensoussan
on the occasion of his 70th birthday with
friendship and admiration, and, for the second
author (RT), sweet reminiscences of many
interactions, from Junior High School, to the
early papers on stochastic partial differential
equations \cite{BensoussanTemam2}, 
\cite{BensoussanTemam}, on the subject of this
article, and to many more interactions over
the years.

\subsection{Presentation of the 2D Stochastic PEs}\label{ss1.1a}

 The 2D stochastic Primitive Equations take the form
\begin{subequations}\label{eq:PE2DBasic}
  \begin{gather}
    \pd{t} u + u \pd{x} u + w \pd{z} u - \nu \Delta u -f v + \pd{x} p = F_u + \sigma_u(\mathbf{v},T) \dot{W}_1,
    \label{eq:PEMoment1}\\
    \pd{t} v + u \pd{x} v + w \pd{z} v - \nu \Delta v + f u = F_v + \sigma_v(\mathbf{v},T) \dot{W}_2,
    \label{eq:PEMoment2}\\
    \pd{z}p = - \rho g,
    \label{eq:PEHydroStatic}\\
    \pd{x} u + \pd{z} w = 0,
    \label{eq:PEDivFree}\\
    \pd{t} T + u \pd{x} T + w \pd{z} T - \mu \Delta T = F_T + \sigma_T(\mathbf{v},T) \dot{W}_3,
    \label{eq:PETempCouple}\\
    \rho = \rho_0( 1 - \beta_T ( T - T_0)).
    \label{eq:PEDensityRelationEmp}
  \end{gather}
\end{subequations}
This two dimensional model may be derived from the classical three
dimensional formulation by positing invariance in one of the
horizontal directions, namely the $y-$ (south-north) direction. Here $(\mathbf{v}, w) = (u,v,w)$, $T$, $\rho$
denote respectively the flow field, the temperature and the density of
the fluid being modeled. The coefficients $\nu$, $\mu$ account for the
molecular viscosity and the rate of heat diffusion. A further
parameter $f$, which is a function of the earth's rotation, appears in
an antisymmetric term and is taken constant (see below). The terms
$F_u$, $F_v$ and $F_T$ correspond to external sources of horizontal
momentum and heat. While the first two terms do not usually appear in
practice we retain them here for mathematical generality and to allow
for the possible treatment, not carried out here, of non-homogenous
boundary conditions.

The white noise processes $\dot{W}_i$, the raison
d'\^{e}tre of the present work may be
written in the expansions
\begin{equation}\label{eq:noiseFormalExp}
   \left(
     \begin{split}
       \sigma_{u}(\mathbf{v},T) \dot{W}_1\\
       \sigma_{v}(\mathbf{v},T) \dot{W}_2\\
       \sigma_{T}(\mathbf{v},T) \dot{W}_3\\
     \end{split}
   \right) =
   \sigma_{\mathbf{v},T} (U) \dot{W}
   = \sum_k \sigma^k_{\mathbf{v},T}(U) \dot{W}^k.
\end{equation}
The $\dot{W}^k$s may be interpreted as the time derivatives of a
sequence of independent standard 3-D brownian motions.  However, since
the sample paths of brownian motion are nowhere differentiable we make
rigorous sense of \eqref{eq:PEMoment1}, \eqref{eq:PEMoment2} and
\eqref{eq:PETempCouple} in a time integrated sense, appealing to the
theory of stochastic integration which we consider in the It\={o}
sense.  From the physical point of view these terms may
be introduced in the model as a means to ``parameterize'' physical
and numerical uncertainties.

We consider the evolution of (\ref{eq:PE2DBasic}) over a rectangular
domain $\mathcal{M} = (0,L) \times (-h,0)$ and label the boundary
$\Gamma_i = (0,L) \times \{0\}$, $\Gamma_b = (0,L) \times \{-h\}$ and
$\Gamma_l = \{0, L\} \times (-h,0)$. We posit the physically
realistic boundary conditions
\begin{subequations}\label{eq:PE2DBasicBC}
  \begin{gather}
  \pd{z} \mathbf{v} + \alpha_{\mathbf{v}} \mathbf{v} = 0, \quad
 w = 0, \quad
  \pd{z} T + \alpha_T T = 0, \quad \textrm{ on } \Gamma_i,
  \label{eq:BCTop}\\
  \mathbf{v} = 0, \quad
  \pd{x}T = 0, \quad \textrm{ on } \Gamma_l,
  \label{eq:BCSides}\\
  \mathbf{v} = 0, \quad
  w= 0, \quad
 \pd{z}T = 0, \quad  \textrm{ on } \Gamma_b.\footnotemark
   \label{eq:BCBot}
  \end{gather}
\end{subequations}
\footnotetext{Many of these boundary conditions may be non-homogenous
   (that is that they may include suitable forcing) in general.  We
   will consider only the homogeneous case here.}

The equations and boundary conditions \eqref{eq:PE2DBasic},
\eqref{eq:PE2DBasicBC} are supplemented by initial conditions for $u$,
$v$ and $T$, that is
\begin{equation}\label{eq:basicInitialCond}
   u = u_0, \quad
   v = v_0, \quad
   T = T_0, \quad
   \textrm{ at } t = 0.
\end{equation}

The Primitive equations may be derived from the compressible
Navier-Stokes equations with a combination of empirical observation
and scale analysis. In particular, since deviations of the density of
the fluid from a mean value are small at geophysical scales, the so---called
Boussinesq approximation justifies treating the flow as
incompressible.\footnote{The Boussinesq approximation concerns the oceans.  For the atmosphere we arrive at very
similar equations by considering the pressure as the vertical coordinate, but, for the sake of simplicity, the
emphasis here will be on the case of the oceans.} Another crucial feature, that the ocean and atmosphere
form a thin later on the earth surface leads to the hydrostatic
approximation which reduces the third momentum equation to
\eqref{eq:PEHydroStatic}. Beyond its obvious numerical significance,
this anisotropy in the governing equations has many interesting
theoretical consequences. We refer the interested reader to the
classical texts \cite{RoisinBeckers} and \cite{Pedlosky} for an
introduction from the physical point of view.

Particularly in view of the numerous complications involved in
extending the existing deterministic model to the stochastic setting
we have made some simplifications for the purposes of clarity of
presentation. The equation (\ref{eq:PE2DBasic}) is a description of
the earth's ocean but all of what follows can be easily extended to
the PEs of the atmosphere or of the coupled atmosphere-ocean system
(see \cite{LionsTemamWang3}). We assume moreover that the
$\beta$-plane approximation is valid. This assumption, that the earth
is locally flat, is appropriate for regional climatological
studies. Of course, for larger scales one must include additional
terms that account for the curvature of the earth. Since it is
convenient to work in the rotating reference frame of the earth's
surface, an additional antisymmetric term appears in the momentum
equations. The Coriolis parameter in this term, which we denote by
$f$, depends on the earth's angular velocity and the local latitude of
the region under investigation. In the context of the $\beta$-plane
approximation, $f$ is usually a linear function of $y$, $f = f_{0}(1 +
\beta y)$.
Here we take $f$ to be constant, but once again the proof
is easily modified to treat the more general case.

Several other terms have been simplified or deleted which may be
reintroduced in their full form with no new complications to the
mathematical framework or to the proof of the main theorem. We neglect
the density dependence on the salinity of the ocean. We therefore drop
the diffusion equation that accounts for variations in salt
concentration in the fluid. We also ignore further, possibly
anisotropic, diffusion terms that may appear in both the momentum and
temperature equations to account for subgrid scale processes, the so
called eddy diffusion terms. Finally, as noted above, we consider only
the case of homogenous boundary conditions.

Dating back to a series of seminal works in the early 90's
\cite{LionsTemamWang1}, \cite{LionsTemamWang2}, and
\cite{LionsTemamWang3} a significant mathematical literature has
developed around the Primitive Equation. In a significant
breakthrough, the global well posedness in 3-D was established
\cite{CaoTiti}, \cite{Kob06}, \cite{Kob07}. Subsequent work of
\cite{ZianeKukavica} developed alternative proofs, which allow for
the treatment of physically relevant boundary conditions. For the
two dimensional deterministic setting we mention
\cite{PetcuTemamWirosoetisno}, \cite{BreschKazhikhovLemoine} where
both the cases of weak and strong solutions are considered. Despite
these breakthroughs in the 3-D system, the 2-D primitive equations
seem to be significantly more difficult mathematically than the 2-D
Navier-Stokes equations. For instance, it is still an open problem
as to whether weak solutions of the Primitive equations in the
deterministic setting are unique. This is a classical exercise for
the 2-D Navier Stokes equations. In any case we refer the interested
reader to the recent survey papers \cite{RousseauTemamTribbia} and
\cite{PetcuTemamZiane} (appearing in \cite{TemamTribbia}) which
provide a systematic overview of deterministic theory. Note that, in
regards to notational conventions and earlier deterministic results
the present article relies heavily on this later work.

While the deterministic mathematical theory is now on a firm ground the
stochastic theory remain underdeveloped. In
\cite{GlattHoltzZiane1} the existence of pathwise, $z$-weak solutions
was established for a simplified model with nonlinear multiplicative
noise and non-physical boundary conditions.   A more extended system
was considered in \cite{EwaldPetcuTemam} again for the so--called $z$-weak solutions
but with additive noise and periodic
boundary conditions. Adapting the methods of \cite{CaoTiti} the 3-d
case with additive noise and nonphysical boundary conditions was
recently treated in \cite{GuoHuang}.

In contrast, beginning with the seminal work \cite{BensoussanTemam},
extensive investigations for the stochastic Navier-Stokes equations have
been undertaken. For weak or martingale solutions we mention
\cite{Viot1}, \cite{Cruzeiro1},
\cite{CapinskiGatarek}, \cite{Flandoli1}, \cite{MikuleviciusRozovskii4}
and further references therein. Regarding pathwise solutions we
mention \cite{ZabczykDaPrato2}, \cite{Breckner},
\cite{BensoussanFrehse}, \cite{BrzezniakPeszat},
\cite{MikuleviciusRozovskii2}. In recent joint work of the first
coauthor \cite{GlattHoltzZiane2} the local and global theory of
pathwise solution in $H^1 = W^{1,2}$ was established. Some of the
tools and techniques developed in this final reference play a central
role herein.

In the present work we will establish the global existence and
uniqueness of a pathwise solution to (\ref{eq:PE2DBasic}),
supplemented by \eqref{eq:PE2DBasicBC} for all $U_0 = (u,v, T) \in (H^1)^3$.
We conclude
this introduction with an outline of the basic difficulties we
encounter along with the main steps in the proof.

\subsection{Basic Estimates and some Difficulties Particular to the
  Stochastic Case}
\label{sec:BasicEst}

The first step in the proof is to establish the local existence, up to
a strictly positive stopping time $\tau$, of a solution $U$ for
(\ref{eq:PE2DBasic}) in $L^\infty_t H^1_x \cap L^2_tH^2_x$. Here and
throughout the rest of the work $U$ stands for the (prognostic)
unknowns in the problem, $U = (u,v,T) = (\mathbf{v},T)$; $U^{(n)}$ will
denote some Galerkin approximation of $U$. Having implemented a
Galerkin scheme the passage to the limit is delicate as it is not
evident a priori how to uniformly choose $\tau > 0$ such that
\begin{displaymath}
  \sup_n \E \left(  \sup_{0 \leq t \leq \tau} |U^{(n)}|^2_{H^1} + \int_0^\tau |U^{(n)}|^2_{H^2}
  \right) < \infty.
\end{displaymath}
 Even if such a $\tau$ were to be found it would remain
unclear how to infer the necessary sub-sequential (strong)
compactness without changing the underlying stochastic basis. To
overcome these difficulties we follow \cite{GlattHoltzZiane2} and
perform Cauchy type estimates for the Galerkin solutions
$\{U^{(n)}\}_{n \geq 1}$ associated with (\ref{eq:PE2DBasic}) up to
a carefully chosen sequence of stopping times. Since we have
sufficient uniform control of the growth of $U^{(n)}$ at time zero
we are able to pass to the limit almost surely up to a strictly
positive time. Note that this stage of the investigation required us
to establish some novel bounds on the nonlinear portion of the
equation in $H^1$ (see (\ref{eq:H1EstBU}) below) and to make careful
use of the equivalence of some fractional order spaces. Since a
significant portion of this analysis is non-probabilistic in
character, we have separated these delicate and technical points to
a separate work, \cite{GlattHoltzTemam1}.

With a local solution $(U,\tau) = ((u,v,T), \tau)$ in hand, further a posteriori
estimates are needed to preclude the possibility of a finite time
blowup. In previous work in the deterministic setting (which
corresponds to the admissible case, $\sigma \equiv 0$) successive
estimates on $U$, $\pd{z} u$ and $\pd{x} u$ in $L^\infty_t L^2 \cap
L^2_tH^1$ were conducted to finally obtain an estimate for $U$ in
$L^\infty_t H^1_{x} \cap L^2_t H^2_{x}$. See \cite{PetcuTemamZiane}.
For the present stochastic setting several difficulties emerge which
prevent a trivial repetition of these estimates.

The first difficulty appears when one tries to make estimates for
$\pd{z} u$. If, on the one hand, we take $\pd{z}$ of
(\ref{eq:PEMoment1}) and then apply It\={o}'s formula to determine an
evolution equation for $|\pd{z} u|^2_{L^2(\mathcal{M})}$, we encounter
terms of the form
\begin{displaymath}
  \int_{\mathcal{M}} \pd{zzz} u \pd{z} u d\mathcal{M}.
\end{displaymath}
Due to (\ref{eq:BCTop}) and (\ref{eq:BCBot}) second order terms occur
on the boundary that seem to be intractable a priori. If, on the other
hand, following \cite[Section 3.3.4]{PetcuTemamZiane}, we attempt to
multiply (\ref{eq:PEMoment1}) by $Q(-\pd{zz} u)$ it is not clear what
the appropriate stochastic interpretation of $du \cdot Q (-\pd{zz} u)$
should be. Here $Q$ is the orthogonal complement of the vertical
averaging operator and is needed to get rid of the pressure in
the governing equations (cf. \eqref{eq:QuzzevolutionEqnRandomPDE} and
the remarks immediately following).

To address these difficulties we introduce an auxiliary linear
stochastic evolution system with a diffusion governed by the now
established local solution of the original system. We use this
system to ``subtract off'' the noise terms from (\ref{eq:PEMoment1})
at the cost of a number of new random terms which we must estimate.
While we are indeed able to treat these terms, at each order our
estimates require almost sure bounds (in $\omega$) on the norms of the
solution at the previous order. For this reason an involved stopping
time argument must be employed at the final step. Here we make repeated use
of a novel abstract result concerning a generic class of stochastic
processes (see Proposition~\ref{thm:stArg1}) which streamlines the
analysis.

\section{Abstract Setting}

We begin with a review of the mathematical setting for the stochastic
Primitive Equations and define the pathwise solutions we will consider
in this work. The deterministic and stochastic preliminaries are
treated successively. For the deterministic elements we largely follow
\cite{PetcuTemamZiane}, to which we refer the reader for a more
detailed treatment. For more theoretical background on the general
theory of stochastic evolution systems we mention the classical book
\cite{ZabczykDaPrato1} or the more recent treatment in
\cite{PrevotRockner}.

\subsection{The Hydrostatic Approximation}
\label{sec:HydrostaticAprox}

The hydrostatic approximation, in concert with the incompressibility and
the boundary conditions leads one to several simple observations that
allow a useful reformulation of \eqref{eq:PE2DBasic}. This will
motivate the mathematical set-up below.

First we consider the third component of the flow $w$. Notice that by
integrating \eqref{eq:PEDivFree} and making use of the boundary
condition \eqref{eq:BCTop} for $w$ we infer that
\begin{equation}\label{eq:diagonosticVar}
  w(x,z) = - \int^0_z \pd{z} w(x, \bar{z}) d \bar{z}
            = \int^0_z \pd{x} u(x, \bar{z}) d\bar{z}.
\end{equation}
Accordingly $w = w(u)$ is seen to be an explicit functional of
$u$\footnote{Indeed, $w$, $p$ and $\rho$ are called \emph{diagnostic}
  variables in geophysical fluid mechanics. By opposition $u$, $v$ and
  $T$ are referred to as \emph{prognostic} variables and are the
  unknowns in an initial value problem which we set up below.}. Also
notice that according to the boundary conditions \eqref{eq:BCTop},
\eqref{eq:BCBot} we impose on $w$, $\smallint_{-h}^0 \pd{x} u d
\bar{z} = 0$. This implies that $\smallint_{-h}^0 u d \bar{z}$ is
constant in $x$ and so, due to the lateral boundary condition
\eqref{eq:BCSides}, we conclude that
\begin{equation}\label{eq:divFreeStandIn2D}
  \int_{-h}^0 u dz = 0.
\end{equation}

Next we consider the pressure.  By integrating the hydrostatic balance
equation \eqref{eq:PEHydroStatic} and making use of the linear dependence
of the density on the temperature \eqref{eq:PEDensityRelationEmp} we deduce
\begin{equation}\label{eq:pressureDecomp}
  p_s(x) - p(x,z) = \int_z^0 \pd{z} p(x,\bar{z}) d \bar{z}
                  = - g \rho_0 \int_z^0 ( 1 - \beta_T ( T(x,\bar{z}) - T_0)) d \bar{z}.
\end{equation}
Here $p_s$ is the surface pressure, which is unknown and a function of the
horizontal variable only.  We have therefore decomposed the pressure
into two components, the second of which couples the first momentum
equation to the heat diffusion equation.  Rearranging above and taking a
partial derivative in $x$ we arrive at
\begin{equation}\label{eq:pressureDecomp}
  \pd{x}p =  \pd{x}p_s - \beta_T g \rho_0 \int_z^0  \pd{x} T d \bar{z}.
\end{equation}

With the above considerations we now rewrite \eqref{eq:PE2DBasic} as:
\begin{subequations}\label{eq:PE2Dreform}
  \begin{gather}
    \begin{split}
    \pd{t} u + u \pd{x} u + w(u) \pd{z} u - \nu \Delta u - f v +
    \pd{x} p_s
    &- \beta_T g \rho_0 \int_z^0  \pd{x} T d \bar{z}  \\
    =& F_u
      +\sigma_u(\mathbf{v},T) \dot{W}_1,
   \end{split}
   \label{eq:PEMoment1R}\\
    \pd{t} v + u \pd{x} v + w(u) \pd{z} v - \nu \Delta v + f u = F_v + \sigma_v(\mathbf{v},T) \dot{W}_2,
    \label{eq:PEMoment2R}\\
   w(u) = \int^0_z \pd{x} u d \bar{z}, \quad   \int_{-h}^0 u dz = 0,
    \label{eq:PEDivFreeR}\\
    \pd{t} T + u \pd{x} T + w(u) \pd{z} T - \mu \Delta T = F_T + \sigma_T(\mathbf{v},T) \dot{W}_3,
    \label{eq:PETempCoupleR}
  \end{gather}
\end{subequations}

\subsection{Basic Function Spaces}
\label{sec:BasicFnSpaces}

The main function spaces used are defined as follows.  Take:
\begin{displaymath}
  \begin{split}
    H := \left\{U = (u, v,T) \in L^2(\mathcal{M})^3: \int_{-h}^0 u dz = 0 \right\}.
 \end{split}
\end{displaymath}
We equip $H$ with the inner product\footnote{One sometimes also finds the more general definition
  $(U, U^{\sharp}) := \int_{\mathcal{M}} \mathbf{v}\cdot
  \mathbf{v}^\sharp d \mathcal{M} + \kappa \int_{\mathcal{M}} T T^\sharp d
  \mathcal{M}$ with $\kappa > 0$ fixed. This $\kappa$ is useful for
  the coherence of physical dimensions and for (mathematical)
  coercivity. Since this is not needed here we take $\kappa =1$.}
\begin{displaymath}
  (U,U^\sharp) := \int_{\mathcal{M}} \mathbf{v}\cdot \mathbf{v}^\sharp d \mathcal{M} + \int_{\mathcal{M}}  T T^\sharp d \mathcal{M}, \quad
  U = (\mathbf{v}, T), U^\sharp = (\mathbf{v}^\sharp, T^\sharp).
\end{displaymath}
Here and below we shall make use of the vertical averaging operator
$\mathit{P}\phi = \frac{1}{h} \smallint_{-h}^0 \phi(\bar{z}) d\bar{z}$
and its orthogonal complement $\mathit{Q}\phi = \phi - \mathit{P}
\phi$. Note that the projection operator $\Pi: L^2(\mathcal{M})^3
\rightarrow H$ may be explicitly defined according to $U \mapsto
(\mathit{Q} u, v, T)$.  We also define
\begin{displaymath}
  \begin{split}
    V := \left\{ U = (u, v, T) \in H^1(\mathcal{M})^3:
    \int_{-h}^0 u dz = 0,
    \mathbf{v} = 0 \textrm{ on } \Gamma_l \cup \Gamma_b \right\}.
  \end{split}
\end{displaymath}
Here we take the inner product $((\cdot, \cdot)) = \nu ((\cdot, \cdot))_1
+ \mu (( \cdot, \cdot ))_2$ where, for given $U = (\mathbf{v},T), U^\sharp =(\mathbf{v}^\sharp, T^\sharp)$
\begin{displaymath}
  \begin{split}
    ((U,U^\sharp))_1 &:=
    \int_{\mathcal{M}} \pd{x} \mathbf{v} \cdot \pd{x} \mathbf{v}^\sharp + \pd{z} \mathbf{v} \cdot \pd{z} \mathbf{v}^\sharp \,d \mathcal{M} +
    \alpha_{\mathbf{v}}\int_{\Gamma_i}  \mathbf{v} \cdot \mathbf{v}^\sharp \,dx,\\
    ((U,U^\sharp))_2 &:=
    \int_{\mathcal{M}} \pd{x}T\pd{x} T^\sharp + \pd{z}T \pd{z} T^\sharp\,d \mathcal{M} +
    \alpha_T \int_{\Gamma_i} T  T^\sharp \,dx. \\
  \end{split}
\end{displaymath}
Note that under these definitions a Poincar\'{e} type
inequality  $|U|
\leq C \|U\|$ holds for all $U \in H^1(\mathcal{M})^3 \supset V$.
Moreover the norms $\| \cdot \|_{H^{1}}$, $\| \cdot \|$ may be seen to
be equivalent over all of $H^1(\mathcal{M})^3$.

Even if $U$ is very regular many of the main terms in the abstract
formulation of (\ref{eq:PE2Dreform}) do not belong to $V$ (see
(\ref{eq:linLowerOrderDef}),(\ref{eq:NLTerm1}), (\ref{eq:NLTerm})) As
such, we shall also make use of some additional auxiliary spaces:
\begin{displaymath}
  \begin{split}
    \tilde{V} &:= \left\{ U = (u, v, T) \in H^1(\mathcal{M})^3:
        \int_{-h}^0 u dz = 0,
      \mathbf{v} = 0 \textrm{ on } \Gamma_l \right\},\\
    \mathcal{Z} &:= \left\{ U = (u, v, T) \in H^1(\mathcal{M})^3:
      \mathbf{v} = 0 \textrm{ on } \Gamma_l  \right\}.
  \end{split}
\end{displaymath}
As for $V$ we endow both spaces with the norm $\| \cdot \|$.  One
may verify that $\Pi:
\mathcal{Z} \rightarrow \tilde{V}$ and is continuous on $H^1(\mathcal{M})^3$.


Finally we take $V_{(2)} = H^2(\mathcal{M})^3 \cap V$ and
equip this space with
the classical $H^2(\mathcal{M})$ norm which we denote by $| \cdot |_{(2)}$.
Since a considerable portion of the work below will consist in making
estimates for the first momentum equation (\ref{eq:PE2DBasic}) (or
equivalently (\ref{eq:PEMoment1R})) we set for simplicity
\begin{displaymath}
  | u |_{L^2(\mathcal{M})} := |u|, \quad
  |\nabla u |_{L^{2}(\mathcal{M})} := \|u\|, \quad
  | u |_{H^2(\mathcal{M})} := | u |_{(2)}, \quad
\end{displaymath}
for $u \in L^{2}(\mathcal{M})$ or $H^{1}(\mathcal{M})$ or $H^{2}(\mathcal{M})$.  Note that since
we will always use a lower case $u$ (or as needed $u^\sharp$,
$u^\flat$) for the first component of elements in the spaces $H, V,
V_{(2)}$ the context will be clear.

\subsection{The deterministic framework}
\label{sec:determ-fram}

The linear second order terms in the equation are captured in the Stokes-type
operator $A$ which is understood as a bounded operator from
$V$ to $V'$ via $\langle A U, U^{\sharp} \rangle = ((U, U^{\sharp}))$. The additional terms in the
variational formulation of this portion of the equation
capture the Robin boundary condition (\ref{eq:BCTop}). They may
be formally derived by multiplying $-\nu\Delta u,  -\nu \Delta v, -\mu \Delta T$ in
\eqref{eq:PEMoment1R}, \eqref{eq:PEMoment2R}, \eqref{eq:PETempCoupleR} by test functions
$u^{\sharp}, v^{\sharp}, T^{\sharp}$, integrating over $\mathcal{M}$ and integrating
by parts.   We shall make use of the subspace $D(A) \subset V_{(2)}$ given by
\begin{displaymath}
  \begin{split}
    D(A) = \{ U = (\mathbf{v}, T) \in V_{(2)}:\ &
       \pd{z} \mathbf{v} +\alpha_{\mathbf{v}} \mathbf{v} = 0,
       \pd{z} T + \alpha_T T = 0 \textrm{ on } \Gamma_i,\\
    &\pd{x} T = 0 \textrm{ on } \Gamma_l,
       \pd{z} T = 0 \textrm{ on } \Gamma_b
       \}.
 \end{split}
\end{displaymath}
On this space we may extend $A$ to an unbounded operator by defining
\begin{displaymath}
  AU = \left(
  \begin{split}
    -\nu \mathit{Q} \Delta u\\
    -\nu \Delta v\\
    -\mu \Delta T\\
  \end{split}
  \right), \quad U \in D(A).
\end{displaymath}

Since $A$ is self adjoint, with a compact inverse $A^{-1} : H
\rightarrow D(A)$ we may apply the standard theory of compact,
symmetric operators to guarantee the existence of an orthonormal basis
$\{\Phi_k\}_{k \geq 0}$ for $H$ of eigenfunctions of $A$ with the
associated eigenvalues $\{\lambda_k\}_{k \geq 0}$ forming an unbounded,
increasing sequence. Note that by the regularity results in
\cite{Ziane1} or \cite{TemamZiane1} we have $\Phi_k \in D(A) \subset
V_{(2)}$. Define
\begin{displaymath}
  H_n = span \{\Phi_1, \ldots, \Phi_n\}.
\end{displaymath}
Take $P_n$ and $Q_n = I - P_n$ to be the projections from $H$ onto
$H_n$ and its orthogonal complement respectively.   For $m > n$ let
$P^{n}_{m} = P_{m} -P_{n}$.

Note that in some previous works, the second component of
the pressure (cf. \eqref{eq:pressureDecomp} and
\cite[Section 2]{PetcuTemamZiane}), is included in the
definition of the principal linear operator $A$.  Since this
breaks the symmetry of $A$ we relegate such terms to a
separate, lower order operator $A_p$,  which we define from $V'$ via
$\langle A_p U, U^\sharp \rangle:=  \kappa g \rho_0
\int_{\mathcal{M}} \int_z^0  T d \bar{z} \pd{x} u^\sharp
d\mathcal{M}, \forall U^\sharp\in V.$  Taking into account the boundary conditions for
$u^\sharp$ on $\Gamma_\ell\enspace (x=0,L),$ this may be extended to a map
$A_{p}: V \rightarrow H$ via
\begin{equation}\label{eq:linLowerOrderDef}
  A_p U =   \left(
  \begin{array}{c}
    -\beta_T g \rho_0 Q \left( \int_z^0 \pd{x} T d \bar{z} \right) \\
    0\\
    0
  \end{array}
  \right).
\end{equation}
If $U \in D(A)$, $A_{p} U \in \tilde{V}$ and we have that
\begin{equation}\label{eq:estimateAloworder}
  \begin{split}
    | A_p U|
      \leq c \|U\|, \quad
    \| A_p U \|
      \leq c  |U|_{(2)}.\\
   \end{split}
\end{equation}

We next capture the nonlinear portion of (\ref{eq:PE2DBasic}).
Accordingly we \emph{define} the diagnostic function $w$ by setting
\begin{equation}\label{2.7a} 
  w(U) = w(u) = \int^0_z \pd{x} u d \bar{z}, \quad
  U = (u, v, T) \in V.
\end{equation}
For $U =(\mathbf{v}, T), U^\sharp=(\mathbf{v}^\sharp,T^\sharp) \in V$ we take $B(U,U^\sharp) = B_1(U,U^\sharp) + B_2(U,U^\sharp)$ where
\begin{equation}\label{eq:NLTerm1}
  B_1(U,U^\sharp) :=  \left(
  \begin{split}
    \mathit{Q} (u \pd{x}u^\sharp)\\
    u \pd{x}v^\sharp\\
    u \pd{x}T^\sharp\\
  \end{split}
  \right)
    = \left(
  \begin{split}
    B_1^1(u,u^\sharp)\\
    B_1^2(u,v^\sharp)\\
    B_1^3(u,T^\sharp)\\
  \end{split}
  \right)
\end{equation}
and
\begin{equation}\label{eq:NLTerm}
  B_2(U,U^\sharp) :=  \left(
  \begin{split}
    \mathit{Q} (w(u) \pd{z} u^\sharp)\\
    w(u) \pd{z} v^\sharp \\
    w(u) \pd{z} T^\sharp \\
  \end{split}
  \right)
  = \left(
  \begin{split}
    B_2^1(u,u^\sharp)\\
    B_2^2(u,v^\sharp)\\
    B_2^3(u,T^\sharp)\\
  \end{split}
  \right).
\end{equation}
We also set $B^j = B^j_1 + B^j_2$, $j = 1,2,3$.  We summarize some
properties of $B$ needed in the sequel
\begin{Lem}\label{thm:Best}
  $B$ is well defined as a bilinear and continuous map
  from $V\times
  V$ to $V',$ from $V \times V_{(2)}$ and $V_{(2)}\times V$ to $H$.
  Moreover $B$ satisfies the following properties and estimates:
  \begin{itemize}
  \item[(i)] For any $U, U^\sharp \in V$ and $\langle
    B(U,U^\sharp),U^\sharp  \rangle = 0$.
  \item[(ii)] For $U, U^\sharp, U^\flat \in V$
    \begin{equation}\label{eq:3dtypeEstimateFor2DPE}
      | \langle B(U, U^\sharp), U^\flat \rangle |
      \leq c \|U \| \|U^\sharp \| | U^\flat |^{1/2} \| U^\flat \|^{1/2}.
   \end{equation}
  \item[(iii)] On the other hand if we assume that $U \in V$ $U^\sharp \in
    V_{(2)}$
    and $U^\flat \in H$ then
    \begin{equation}\label{eq:strongTypeEstimate}
      |\langle B(U, U^\sharp), U^\flat \rangle|
        \leq c \|U\| \|U^\sharp\|^{1/2} |U^\sharp|^{1/2}_{(2)} |U^\flat|.
    \end{equation}
    In particular, for $U \in V_{(2)}$,
    \begin{equation}\label{eq:L2NormBUv2}
      |B(U,U)|^2 \leq c\|U\|^{3} |U|_{(2)}.
    \end{equation}

    Also if $U = (\mathbf{v}, T) = (u,v, T) \in V_{(2)}$,
$U^\sharp\in V$ and $U^b\in H,$ then
    \begin{equation}\label{eq:strongTypeEstimateFirstCompControl}
      | \langle B(U, U^\sharp), U^\flat \rangle | \leq c \|u\|^{1/2} |u|_{(2)}^{1/2}\|U^\sharp\| |U^\flat|.
    \end{equation}
  \item[(iv)] For $U \in V_{(2)}$, $B(U) \in \tilde{V}$ and satisfies the estimate

   \begin{equation}\label{eq:H1EstBU}
     \begin{split}
      \|B(U,U)\|^2
       \leq& c \|U\| |U|^3_{(2)}.
   \end{split}
   \end{equation}
  \item[(v)] Given $U, U^\sharp \in V_{(2)}$,  $U^\flat \in H$
    \begin{equation}\label{eq:genericFirstCompEstL2ClassComp1}
      \begin{split}
        | \langle B^1_1(u, u^\sharp), u^\flat \rangle|
          \leq c
          |u |^{1/2} | u |_{(2)}^{1/2}
          | \pd{x} u^\sharp | |u^\flat |,
        \end{split}
    \end{equation}
    \begin{equation}\label{eq:genericFirstCompEstL2ClassComp2}
      \begin{split}
        | \langle B^1_1(u, u^\sharp), u^\flat \rangle|
        \leq c
        |u |^{1/2} \|u \|^{1/2}
        |\pd{x}u^\sharp |^{1/2}  \|\pd{x}u^\sharp \|^{1/2} |u^\flat |.\\
      \end{split}
    \end{equation}
    On the other hand
    \begin{equation}\label{eq:genericFirstCompEstL2FuckedComp1}
      \begin{split}
        | \langle B^1_2(u, u^\sharp), u^\flat \rangle|
        \leq c
        |\pd{x} u |
        |\pd{z} u^\sharp |^{1/2}
        \|\pd{z} u^\sharp \|^{1/2}
        |u^\flat |,
      \end{split}
    \end{equation}
    \begin{equation}\label{eq:genericFirstCompEstL2FuckedComp2}
      \begin{split}
        | \langle B^1_2(u, u^\sharp), u^\flat \rangle|
        &\leq c
        \| u \|^{1/2}  |u |_{(2)}^{1/2}
        | \pd{z} u^\sharp |
        |u^\flat |.
      \end{split}
    \end{equation}
    \item[(vi)] For $U = (\mathbf{v}, T) \in D(A)$
    \begin{equation}\label{eq:semiCancelpdz}
      \langle B^1(u, u), -\pd{zz} u\rangle =
       - \frac{2}{h} \int_\mathcal{M} u \pd{x} u ( \alpha_{\mathbf{v}} u(x,0) + \pd{z} u(x, -h ) d\mathcal{M}
    \end{equation}
    which admits the estimate
    \begin{equation}\label{eq:semiCancelpdzEst}
      \begin{split}
      |\langle B^1(u, u), -\pd{zz} u\rangle|
         &\leq c (|u|\| u \|^2
             +|\pd{z}u|^{1/2} \|\pd{z}u\|^{1/2}
               |u|^{1/2} \| u \|^{3/2}).
     \end{split}
   \end{equation}
\end{itemize}
\end{Lem}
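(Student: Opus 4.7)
The plan is to prove parts (i)--(vi) more or less in order, relying on the explicit formulas \eqref{2.7a}, \eqref{eq:NLTerm1}, \eqref{eq:NLTerm} together with anisotropic Gagliardo--Nirenberg / Ladyzhenskaya / Agmon inequalities and integration by parts in the horizontal and vertical variables separately, exploiting the boundary conditions encoded in $V$ and $V_{(2)}$. First, for (i), I would write $\langle B(U,U^\sharp), U^\sharp\rangle$ as the sum of $\int_{\mathcal M}(u\partial_x+w(u)\partial_z)U^\sharp\cdot U^\sharp\, d\mathcal M$, integrate by parts once, and use the pointwise identity $\partial_x u + \partial_z w(u)=0$ together with the vanishing of $w(u)$ at $z=0,-h$ (coming from \eqref{eq:BCTop}, \eqref{eq:BCBot} and the constraint $\int_{-h}^0 u\,dz=0$) and of $u$ on $\Gamma_l$. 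The $\mathit Q$ in front of the first component is harmless since $U^\sharp$ has zero vertical mean in its first component.

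For (ii), (iii) and (v), everything reduces to controlling $B_1$ and $B_2$ separately. For $B_1$ of the form $u\partial_x U^\sharp$ I would apply Hölder and then Ladyzhenskaya $|u|_{L^4}\le c|u|^{1/2}\|u\|^{1/2}$ to get \eqref{eq:3dtypeEstimateFor2DPE}, and Agmon $|u|_{L^\infty}\le c\|u\|^{1/2}|u|_{(2)}^{1/2}$ for the strong estimates \eqref{eq:strongTypeEstimate} and \eqref{eq:strongTypeEstimateFirstCompControl}. For $B_2$, where the only non-trivial factor is $w(u)=\int_z^0 \partial_x u\, d\bar z$, the key observation is the anisotropic bound $|w(u)|_{L^\infty_z L^2_x}\le c\|u\|$ (Minkowski plus Cauchy--Schwarz in $z$), which combined with $|\partial_z U^\sharp|_{L^2_z L^\infty_x}\le c|\partial_z U^\sharp|^{1/2}\|\partial_z U^\sharp\|^{1/2}$ yields the $B_2$ contribution to \eqref{eq:3dtypeEstimateFor2DPE}; the strong and component variants \eqref{eq:genericFirstCompEstL2FuckedComp1}--\eqref{eq:genericFirstCompEstL2FuckedComp2} are handled by distributing the anisotropic derivatives differently (in \eqref{eq:genericFirstCompEstL2FuckedComp2} one uses Agmon on $u$ itself to extract $|u|_{(2)}^{1/2}$ via $|\partial_x u|_{L^\infty_x L^2_z}\le c\|u\|^{1/2}|u|_{(2)}^{1/2}$). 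Estimate \eqref{eq:L2NormBUv2} is then a direct consequence of \eqref{eq:strongTypeEstimate} with $U^\flat=B(U,U)$.

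The main obstacle is (iv), the $H^1$ bound \eqref{eq:H1EstBU}, which is singled out in the introduction as a novel estimate. Here one must differentiate $B(U,U)=\Pi(u\partial_x U+w(u)\partial_z U)$ in $x$ and $z$ and bound every resulting product in $L^2(\mathcal M)$ by $c\|U\|^{1/2}|U|_{(2)}^{3/2}$. The dangerous pieces are $\partial_x u\,\partial_z U$ and $\partial_z w(u)\,\partial_z U=-\partial_x u\,\partial_z U$ (from the divergence constraint), both of which I would control by the anisotropic estimate $|\partial_x u|_{L^4_x L^2_z}\cdot|\partial_z U|_{L^4_x L^2_z}$ combined with the 1D Gagliardo--Nirenberg inequality in $x$, $|f|_{L^4_x}\le c|f|_{L^2_x}^{1/2}|\partial_x f|_{L^2_x}^{1/2}$, applied pointwise in $z$ and then integrated; the same scheme handles $u\,\partial_{xx}U$ and $w(u)\partial_{xz}U$, using $|w(u)|_{L^\infty_z L^4_x}\le c\|u\|^{1/2}|u|_{(2)}^{1/2}$. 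One also needs $B(U,U)\in\tilde V$, which requires showing that the first component has zero vertical mean (automatic by the $\mathit Q$ in $B^1$) and that $\mathbf v$-components vanish on $\Gamma_l$ (which follows because $u|_{\Gamma_l}=0$ kills both $u\partial_x$ and, via $w(u)|_{\Gamma_l}=0$, the $w(u)\partial_z$ factor).

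Finally, for (vi), I would integrate $\langle B^1(u,u),-\partial_{zz}u\rangle$ by parts once in $z$, obtaining an interior term plus boundary contributions on $\Gamma_i\cup\Gamma_b$; the Robin and Dirichlet conditions of $D(A)$ turn these boundary terms into the explicit right-hand side of \eqref{eq:semiCancelpdz}. The interior term cancels using the divergence structure $\partial_x u+\partial_z w(u)=0$, exactly as in (i). The estimate \eqref{eq:semiCancelpdzEst} then follows from the trace inequality $|u(\cdot,0)|_{L^4(\Gamma_i)}^2+|\partial_z u(\cdot,-h)|_{L^2(\Gamma_b)}^2\le c(|u|\|u\|+|\partial_z u|\|\partial_z u\|)$ combined with Ladyzhenskaya in the plane to control $|u\partial_x u|_{L^{4/3}}$ or, equivalently, by $|u\partial_x u|_{L^2(\Gamma_i)}$-style estimates and the standard 2D Gagliardo--Nirenberg $|u|_{L^4}\le c|u|^{1/2}\|u\|^{1/2}$.
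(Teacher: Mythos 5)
The paper itself does not prove this lemma: it cites the literature for the continuity properties and the cancellation (i), refers to \cite{TemamZiane1} and \cite{GlattHoltzZiane1} for the anisotropic estimates \eqref{eq:3dtypeEstimateFor2DPE}, \eqref{eq:strongTypeEstimate}, \eqref{eq:strongTypeEstimateFirstCompControl}, \eqref{eq:genericFirstCompEstL2FuckedComp1}, \eqref{eq:genericFirstCompEstL2FuckedComp2}, \eqref{eq:semiCancelpdzEst}, to \cite{Temam1} for \eqref{eq:genericFirstCompEstL2ClassComp1}--\eqref{eq:genericFirstCompEstL2ClassComp2}, and defers the new estimate \eqref{eq:H1EstBU} to the companion work \cite{GlattHoltzTemam1}. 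Your direct sketch uses exactly the toolbox those references employ (anisotropic H\"older, one-dimensional Agmon/Gagliardo--Nirenberg in $x$ and $z$, the bound $|w(u)|_{L^\infty_z L^2_x}\le c\|u\|$, and integration by parts exploiting $\partial_x u+\partial_z w(u)=0$ together with $w(u)=0$ at $z=0,-h$ and $u=0$ on $\Gamma_l$), so in substance it is the same route, carried out rather than cited.

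Two concrete points in your write-up need repair. First, for the $B_2$ contribution to (ii) you pair $|w(u)|_{L^\infty_z L^2_x}$ with $|\partial_z U^\sharp|_{L^2_z L^\infty_x}\le c\,|\partial_z U^\sharp|^{1/2}\|\partial_z U^\sharp\|^{1/2}$; but $\|\partial_z U^\sharp\|$ is a second-order quantity, not controlled by $\|U^\sharp\|$ when $U^\sharp\in V$ only, so as written this proves \eqref{eq:genericFirstCompEstL2FuckedComp1} rather than \eqref{eq:3dtypeEstimateFor2DPE}. For (ii) the $L^2_zL^\infty_x$ factor must be placed on $U^\flat$ (giving $|U^\flat|^{1/2}\|U^\flat\|^{1/2}$), keeping $\partial_z U^\sharp$ in $L^2(\mathcal M)$. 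Second, your argument that $B(U)\in\tilde V$ invokes $w(u)|_{\Gamma_l}=0$, which is false in general: $w(u)(0,z)=\int_z^0\partial_x u(0,\bar z)\,d\bar z$ involves the normal derivative $\partial_x u$, which need not vanish on $\Gamma_l$. The correct reason is that $\partial_z\mathbf{v}$ is a tangential derivative of a function vanishing identically on $\Gamma_l$, hence $w(u)\partial_z\mathbf{v}=0$ there (and the $\mathit{Q}$ projection preserves this). Finally, keep in mind that (iv) is the estimate the authors single out as new and ``requiring extensive computations'': your plan identifies the right dangerous products ($\partial_x u\,\partial_z U$, $w(u)\partial_{xz}U$, $u\,\partial_{xx}U$, etc.) and the right anisotropic bounds, but a complete proof must also handle the boundary contribution $\alpha_{\mathbf{v}}\int_{\Gamma_i}|\cdot|^2\,dx$ in the $\|\cdot\|$-norm and the full list of terms produced by differentiating $\mathit{Q}(u\partial_x u+w(u)\partial_z u)$ in both variables, which is why it occupies a separate paper; your two-sentence sketch should be regarded as an outline of that computation, not a substitute for it.
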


The continuity properties of $B$ as well as the
basic cancellation property (i) are well established in the literature.
The estimates \eqref{eq:genericFirstCompEstL2ClassComp1},
\eqref{eq:genericFirstCompEstL2ClassComp2} may be
established as for the classical Navier-Stokes systems (see, for
example, \cite{Temam1}). On the other hand the estimates
\eqref{eq:3dtypeEstimateFor2DPE}, \eqref{eq:strongTypeEstimate},
\eqref{eq:strongTypeEstimateFirstCompControl},
\eqref{eq:genericFirstCompEstL2FuckedComp1},
\eqref{eq:genericFirstCompEstL2FuckedComp2},
\eqref{eq:semiCancelpdzEst}
may be proved with
anisotropic techniques. See \cite{TemamZiane1} or
\cite{GlattHoltzZiane1}. The property \eqref{eq:H1EstBU},
which is new and requires extensive computations,
may be found in \cite{GlattHoltzTemam1}.

We next capture the Coriolis forcing with the bounded operator
$E: H \rightarrow H$ given by
\begin{equation}\label{eq:CorTerm}
  EU  :=  \left(
  \begin{array}{c}
    -Q f v\\
    f u\\
    0
  \end{array}
  \right).
\end{equation}
We observe that $E$ is also continuous from $V$ to $\tilde{V}$ and
that
\begin{equation}  \label{eq:CorbndOperator}
 | EU | \leq c|U|, \quad \|EU\| \leq c\|U\|.
\end{equation}
Finally, for brevity of notation we shall sometimes write
\begin{equation}\label{eq:nonlinearExt}
  N(U) = A_{p}U + B(U, U) + EU, \quad U \in V.
\end{equation}

\subsection{The stochastic framework: nonlinear, multiplicative white noise forcing}
\label{sec:stoch-fram-non}

It finally remains to define the white noise driven terms in
\eqref{eq:PE2DBasic}.  To begin we fix a stochastic basis
$\mathcal{S}:= (\Omega, \mathcal{F}, \{\mathcal{F}_t\}_{t \geq 0}, \mathbb{P},
\{W^k\}_{k \geq 1})$, that is a filtered probability space
with $\{W^k\}_{k \geq 1}$ a sequence of
independent standard 1-D Brownian motions relative to the filtration $\mathcal{F}_t$.  In
order to avoid unnecessary complications below we may assume that
$\mathcal{F}_t$ is complete and right continuous (see \cite{ZabczykDaPrato1}).
Fix a separable Hilbert space $\mathfrak{U}$ with an associated
orthonormal basis $\{e_k\}$. We
may formally define $W$ by taking $W = \sum_{k} W^k
e_k$.
As such $W$ is a cylindrical Brownian motion evolving over
$\mathfrak{U}$.

We next recall some basic definitions and properties of spaces of
Hilbert-Schmidt operators.  For this purpose we suppose that $X$ and
$\tilde{X}$ are any separable Hilbert spaces with the associated norms and  inner
products given by $| \cdot |_X$, $| \cdot |_{\tilde{X}}$ and $\langle \cdot, \cdot \rangle_{X}$ $\langle \cdot,
\cdot \rangle_{\tilde{X}}$, respectively.  We denote by
\begin{displaymath}
 L_2(\mathfrak{U}, X) = \{ R \in \mathcal{L}(\mathfrak{U},X): \sum_k |Re_k|^2_{X} < \infty \},
\end{displaymath}
the collection of Hilbert Schmidt operators from $\mathfrak{U}$ to
$X$. By endowing this collection with the inner product
\begin{displaymath}
  \langle R, S \rangle_{L_2(\mathfrak{U}, X)} = \sum_k \langle R e_k, S e_k \rangle_X,
\end{displaymath}
we may consider $L_2(\mathfrak{U},X)$ as itself being a Hilbert space.
One may readily show that if $R^{(1)} \in L_2(\mathfrak{U}, X)$ and
$R^{(2)} \in L(X,\tilde{X})$ then indeed $R^{(2)}R^{(1)} \in
L_2(\mathfrak{U}, \tilde{X})$.

Given an $X$-valued predictable\footnote{For a given  stochastic basis
  $\mathcal{S}$, let $\Phi = [0,\infty)\times\Omega$ and take
  $\mathcal{G}$ to be the $\sigma$-algebra generated by sets of the
  form
  \begin{displaymath}
    (s,t] \times F, \quad 0 \leq s< t< \infty, F \in \mathcal{F}_s;
    \quad \quad
    \{0\} \times F, \quad F \in \mathcal{F}_0.
  \end{displaymath}
  Recall that a $X$ valued process $U$ is called predictable (with
  respect to the stochastic basis $\mathcal{S}$) if it is measurable
  from $(\Phi,\mathcal{G})$ into $(X, \mathcal{B}(X))$,
  $\mathcal{B}(X)$ being the family of Borel sets of $X$.} process $G \in
L^{2}(\Omega; L^{2}_{loc}([0, \infty),L_{2}(\mathfrak{U}, X)))$ one
may define the (It\={o}) stochastic integral
\begin{displaymath}
   M_{t} := \int_{0}^{t} G dW = \sum_k \int_0^t G_k dW^k,
\end{displaymath}
as a square integrable function from $\Omega$ into $X.$  Furthermore
$M_t$ is an element of $\mathcal{M}^2_X$, that is the space of all
$X$-valued square integrable martingales (see \cite[Section
2.2, 2.3]{PrevotRockner}), and, as such, $\{M_t \}_{t \geq 0}$ has
many desirable properties.  Most notably the Burkholder-Davis-Gundy (BDG)
inequality holds which in our context takes the form
\begin{equation}\label{eq:BDG}
  \E \left(\sup_{ t' \in [0,t]} \left| \int_0^{t'} G dW  \right|_X \right)
  \leq c\enspace \E \left(
    \int_0^{t} |G|_{L_2(\mathfrak{U}, X)}^2 \right)^{1/2},
\end{equation}
for any $t > 0$, where $c$ is here an absolute constant.

Given any Banach spaces $\mathcal{X}$ and $\mathcal{Y}$ we denote by
$Bnd_u(\mathcal{X}, \mathcal{Y})$, the collection of all mappings
\begin{displaymath}
  \Psi: \Omega \times [0, \infty) \times \mathcal{X} \rightarrow \mathcal{Y} ,
\end{displaymath}
such that $\Psi$ is almost surely continuous in $[0,\infty) \times \mathcal{X}$ and
\begin{displaymath}
    \| \Psi(x) \|_{\mathcal{Y}} \leq c(1 + \|x\|_{\mathcal{X}}), \quad x
    \in \mathcal{X},\\
\end{displaymath}
where the numerical constant $c$ may be chosen
independently of $t$ and $\omega$.  If in addition
\begin{displaymath}
   \| \Psi(x) - \Psi(y)\|_{\mathcal{Y}} \leq c \|x - y \|_{\mathcal{X}},
   \quad x, y \in \mathcal{X}\\
\end{displaymath}
we say that $\Psi$ is in $Lip_u(\mathcal{X}, \mathcal{Y})$.

With these notations now in place we define
\begin{equation}\label{eq:Hdef}
  \sigma(U) =
  \left(
  \begin{array}{c}
    Q \sigma_u(\mathbf{v},T)\\
    \sigma_v(\mathbf{v},T)\\
    \sigma_T(\mathbf{v},T)
  \end{array}
  \right)
\end{equation}
We shall assume throughout this work that
\begin{displaymath}
  \sigma: \Omega \times [0, \infty) \times H \rightarrow L_2(\mathfrak{U},H)
\end{displaymath}
such that
\begin{equation}\label{eq:measurablityConditions}
  \begin{split}
    \textrm{If }U \textrm{ is an } &H\textrm{-valued, predictable process, then}\\
    & \sigma(U)  \textrm{ is an $L_2(\mathfrak{U},H)$-valued,
   predictable process,}
  \end{split}
\end{equation}
and
\begin{equation}\label{eq:lipCond}
  \sigma \in Lip_u(H, L_2(\mathfrak{U}, H))
   \cap Lip_u(V, L_2(\mathfrak{U}, V))
   \cap Bnd_u(V, L_2(\mathfrak{U}, D(A))).
\end{equation}
Note that under the conditions imposed above the stochastic integral
$\int_0^\tau \sigma(U) dW$ may be shown to be well defined, taking
values in $H$ for any $H$ predictable $U \in L^2(\Omega,
L^2_{loc}([0,\infty); H))$.  Denoting $\sigma_k(\cdot) = \sigma(\cdot)
e_k$ we may
interpret this integral in the expansion\footnote{To recover the
  formulation of the stochastic forcings in \eqref{eq:PE2DBasic},  \eqref{eq:noiseFormalExp} we may consider the special
  case where
  \begin{displaymath}
    \begin{split}
      \sigma^k_u \equiv 0 &\textrm{ when } k = 0 \, (mod \, 3) \\
      \sigma^k_v \equiv 0 &\textrm{ when } k = 1 \, (mod \, 3) \\
      \sigma^k_T \equiv 0 &\textrm{ when } k = 2 \, (mod \, 3) \\
    \end{split}
  \end{displaymath}
  and take $\dot{W}_1 = \sum_k \dot{W}^{3k} e_{3k}$, $\dot{W}_2 = \sum_k \dot{W}^{3k+1} e_{3k
    +1}$, $\dot{W}_3 = \sum_k \dot{W}^{3k+1} e_{3k +2}$.
}
\begin{displaymath}
  \begin{split}
  \int_0^t \sigma(U) dW = \sum_{k \geq 1} \int_0^t \sigma^k(U) dW^k
                   = \sum_{k \geq 1} \left(
                   \begin{split}
                     \int_0^t Q\sigma_{u}^k(U) dW^k,\\
                     \int_0^t \sigma_{v}^k(U) dW^k,\\
                     \int_0^t \sigma_{T}^k(U) dW^k\\
                   \end{split}
                 \right).\\
  \end{split}
\end{displaymath}

\begin{Rmk}\label{rmk:NoiseCond}
  The condition (\ref{eq:lipCond}) may be weakened to
  \begin{equation}\label{eq:lipCondClass}
    \sigma \in Lip_u(H, L_2(\mathfrak{U}, H))
    \cap Lip_u(V, L_2(\mathfrak{U}, V))
    \cap Bnd_u(D(A), L_2(\mathfrak{U}, D(A)))
  \end{equation}
  in the proof of local and maximal existence of solutions below (see
  Proposition~\ref{thm:MaxExist}).  However, for the proof of global
  existence of solutions we need the stronger condition (\ref{eq:lipCond}).  See
  Remark~\ref{rmk:BndMods} below, for further details.  Even with this more
  restrictive condition \eqref{eq:lipCond} the theory covers a physically interesting
  class of additive and nonlinear multiplicative stochastic forcing regimes relevant 
  to the 'parametrization' problem
  discussed in the Introduction.   We refer the interested reader to
  \cite{GlattHoltzTemamTribbia1} for further details and examples.
\end{Rmk}

For the external forcing terms $F_u,F_v, F_T$ we let:
\begin{displaymath}
  F = \left(
  \begin{array}{c}
    Q F_u\\
    F_v\\
    F_T
  \end{array}
  \right).
\end{displaymath}
We assume throughout the analysis below that $F$ is an $H$-valued,
predictable process with
\begin{equation}\label{eq:SizeConditiononF}
  F \in L^2( \Omega; L^2_{loc} ([0,\infty), H)).
\end{equation}
We shall allow for the case of probabilistic dependence in the initial
data $U_{0} = (u_{0}, v_{0},T)$ as well.  Specifically we assume that
\begin{equation}\label{eq:dataCond}
  U_{0} \in L^{2}(\Omega; V) \textrm{ and is } \mathcal{F}_{0} \textrm{-measurable}.
\end{equation}

\subsection{Definition of solutions}
\label{sec:definition-solutions}

With the abstract mathematical definitions for each term in the
original system now in hand we may reformulate (\ref{eq:PE2Dreform})
as an abstract evolution equation
\begin{equation}\label{eq:mainSystemAbsDiffForm}
  \begin{split}
  d U  + (AU + N(U)) dt &= F dt + \sigma(U) dW,\\
           U(0) &= U_0.
 \end{split}
\end{equation}
More precisely we have the following basic notion of local and global
pathwise solutions to the above system.
\begin{Def}[Pathwise Strong Solutions of the Primitive Equations]
  \label{def:solutionNot}
  Let $\mathcal{S} = (\Omega, \mathcal{F}, \{\mathcal{F}_t\}_{t \geq
    0}, \mathbb{P}, W)$ be a fixed stochastic basis. Assume that $F$ is
  as in (\ref{eq:SizeConditiononF}), that $U_{0}$ satisfies \eqref{eq:dataCond} and that
  $\sigma$ satisfies (\ref{eq:measurablityConditions}), (\ref{eq:lipCond}).
  \begin{itemize}
  \item[(i)] A pair $(U, \tau)$ is \emph{a local strong (pathwise) solution} of
    (\ref{eq:mainSystemAbsDiffForm}) if $\tau$ is a strictly positive stopping
    time and $U(\cdot \wedge \tau)$ is a $\mathcal{F}_{t}$ adapted process in
    $H$ so that
    \begin{equation}\label{eq:Uregularity}
      \begin{split}
        U(\cdot \wedge \tau) \in  L^2(\Omega; C ([0,\infty); V)),\\
        U(\tau) \indFn{t \leq \tau} \in  L^2(\Omega; L^2_{loc}([0,\infty); D(A))),
      \end{split}
    \end{equation}
    and satisfies, for every $t \geq 0$ and every $\tilde{U} \in H$,
    \begin{equation}\label{eq:spdeAbstrac}
      \begin{split}
        \langle U(t \wedge &\tau), \tilde{U} \rangle
           + \int_0^{t \wedge \tau} \langle A U + N(U), \tilde{U} \rangle ds\\
           &= \langle U_{0}, \tilde{U} \rangle
                 + \int_0^{t \wedge \tau} \langle F, \tilde{U} \rangle ds
                                     + \int_0^{t \wedge \tau} \langle \sigma(U), \tilde{U} \rangle dW.
     \end{split}
   \end{equation}
 \item[(ii)] Strong solutions of \eqref{eq:mainSystemAbsDiffForm} are said to be
   \emph{(pathwise) unique} up to a stopping time $\tau > 0$ if given
   any pair of strong solutions $(U^1,\tau)$, $(U^2,\tau)$ which
   coincide at $t = 0$ on $\tilde{\Omega} = \{U^1(0) = U^2(0)\}$, then
   \begin{displaymath}
    \Prb \left( \indFn{\tilde{\Omega}} ( U^1(t \wedge \tau) - U^2(t \wedge
      \tau))  = 0; \forall t \geq 0 \right) = 1.
   \end{displaymath}
 \item[(iii)] Suppose that $\{\tau_n\}_{n\geq 1}$ is a strictly increasing sequence
    of stopping times converging to a (possibly infinite) stopping time
    $\xi$ and assume that $U$ is a continuous $\mathcal{F}_{t}$-adapted process in $H$.  We
   say that the triple $(U,\xi, \{\tau_n\}_{n\geq 1} )$ is \emph{a maximal
     strong solution} if $(U, \tau_n)$ is a local strong solution for
   each $n$ and
  \begin{equation}\label{eq:FiniteTimeBlowUp}
      \sup_{t \in [0, \xi]} \|U\|^2 + \int_0^{\xi} |A U|^2 ds = \infty
  \end{equation}
  almost surely on the set $\{\xi < \infty\}$.
  \item[(iv)] If $(U, \xi, \{\tau_n\}_{n\geq 1} )$ is a maximal strong solution and $\xi =
    \infty$ a.s. then
    we say that the solution is global.
  \end{itemize}
\end{Def}

We now have a complete mathematical framework and may state, in
precise terms, the main theorem in this work:
\begin{Thm}\label{thm:MainReslt}
  Suppose that the conditions imposed in Definition~\ref{def:solutionNot} hold.
  Then there exists a unique global solution $U$ of (\ref{eq:mainSystemAbsDiffForm}).
\end{Thm}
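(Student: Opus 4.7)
The argument splits naturally into two stages: first constructing a local strong solution together with a maximal existence time $\xi$, and then ruling out finite-time blow-up by establishing a priori bounds incompatible with the criterion~\eqref{eq:FiniteTimeBlowUp}.

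For the first stage I would implement a Galerkin scheme based on $\{\Phi_k\}$ and apply It\={o}'s formula to $|U^{(n)}|^2$ and $\|U^{(n)}\|^2$ to extract energy and enstrophy controls up to a carefully chosen sequence of stopping times. Since standard compactness arguments would force changing the underlying stochastic basis, I would instead follow the Cauchy-in-probability strategy of~\cite{GlattHoltzZiane2}: using bilinearity of $B$, the anisotropic estimate~\eqref{eq:3dtypeEstimateFor2DPE}, and the Lipschitz assumption~\eqref{eq:lipCond}, one shows that the differences $U^{(n)} - U^{(m)}$ converge almost surely to zero in $C([0,\tau]; V) \cap L^{2}([0,\tau]; D(A))$ for some strictly positive $\mathcal{F}_{t}$-stopping time $\tau$. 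Pathwise uniqueness up to any common stopping time then follows by subtracting two solutions, applying It\={o} to the squared $H$-norm, and exploiting cancellation~(i) of Lemma~\ref{thm:Best} together with Gronwall after a suitable stopping-time cutoff. Patching these local solutions produces the maximal triple $(U,\xi, \{\tau_n\})$.

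The heart of the argument is the second stage. As sketched in Section~\ref{sec:BasicEst}, testing the first momentum equation directly against $-\pd{zz} u$, or applying It\={o} to $|\pd{z} u|^2$, produces either intractable boundary contributions or an undefined stochastic product. The plan is to circumvent this by introducing an auxiliary linear stochastic system $d\tilde{U} + A \tilde{U} \, dt = \sigma(U) dW$, $\tilde{U}(0) = 0$, driven by the diffusion coefficient evaluated at the already-constructed local solution $U$; under~\eqref{eq:lipCond} the solution $\tilde{U}$ has good $V$-valued moments. The difference $\bar{U} = U - \tilde{U}$ then satisfies a random PDE with no stochastic forcing, which allows one to mimic pathwise the deterministic strategy of~\cite{PetcuTemamZiane}: an It\={o} energy estimate for $|U|^2$; then a pathwise estimate for $\pd{z} \bar{u}$ in $L^\infty_t L^2 \cap L^2_t H^1$ by testing against $-\pd{zz} \bar{u}$ and exploiting the cancellation identity~\eqref{eq:semiCancelpdz} with its bound~\eqref{eq:semiCancelpdzEst}; and finally an estimate for $\pd{x} \bar{u}$ which closes the $H^1$ bound for $U$ by invoking~\eqref{eq:strongTypeEstimateFirstCompControl} and the new estimate~\eqref{eq:H1EstBU} to absorb the nonlinearity.

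The main obstacle will be the nested, pathwise nature of these bounds: each higher-order estimate requires almost-sure control of lower-order norms of $U$ on the same time interval, so the favorable $\omega$-set shrinks at every step and no single deterministic stopping time suffices. To organize this I would invoke the abstract Proposition~\ref{thm:stArg1}, which promotes a local almost-sure bound holding along a nondecreasing sequence of stopping times into a uniform bound on $[0,\xi)$. Chaining these upgrades yields $\sup_{t \in [0,\xi)} \|U\|^{2} + \int_{0}^{\xi} |AU|^{2} \, ds < \infty$ almost surely, in direct contradiction with~\eqref{eq:FiniteTimeBlowUp} on $\{\xi < \infty\}$. Hence $\xi = \infty$ almost surely and the maximal strong solution is global.
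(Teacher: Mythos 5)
Your proposal follows essentially the same route as the paper: Galerkin approximations with Cauchy estimates up to carefully chosen stopping times (the method of \cite{GlattHoltzZiane2}) for the local/maximal solution, subtraction of an auxiliary linear stochastic system driven by $\sigma(U)$, pathwise anisotropic estimates for $\pd{z}\hat{u}$ and then $\pd{x}\hat{u}$, a stochastic Gronwall bound for $\|U\|^2 + \int |AU|^2$ up to stopping times controlled by $\int |u|_{(2)}^2$, and finally the abstract stopping-time result (Proposition~\ref{thm:stArg1}) to contradict \eqref{eq:FiniteTimeBlowUp} and conclude $\xi=\infty$. One small correction to keep in mind when writing this out: the auxiliary process must be bounded pathwise in $D(A)$ (i.e. $\sup_{t'\leq t}|A\check{U}|^2<\infty$, which is exactly what the last condition in \eqref{eq:lipCond} provides), not merely in $V$ as you state, since the cross terms $J_6$--$J_8$ in the $\pd{z}\hat{u}$ estimate require control of $|\check{U}|_{(2)}$.
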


\section{Local and Maximal Existence and Uniqueness}

The proof of local and maximal existence of solutions for
(\ref{eq:mainSystemAbsDiffForm}) makes use of techniques developed
for the 3D Navier-Stokes Equations
\cite{GlattHoltzZiane2}.   Since the analysis here is very similar on many points
to \cite{GlattHoltzZiane2}
our treatment will be brief in some details.
However, one crucial step, to show that the
Galerkin approximations associated to \eqref{eq:mainSystemAbsDiffForm}
are Cauchy (in appropriate spaces) is quite delicate.  This is
due to stray terms that arise from the
discretization which must be controlled.  See
Proposition~\ref{thm:CompEst} below.

\begin{Prop}\label{thm:MaxExist}
  Suppose that $U_0$, $F$ satisfy the conditions imposed in
  Definition~\ref{sec:HydrostaticAprox}.  For $\sigma$ we assume
  (\ref{eq:measurablityConditions}) and may weaken
  (\ref{eq:lipCond}) to (\ref{eq:lipCondClass}).
  Then there exists a unique maximal strong solution $(U,\xi)$ for (\ref{eq:mainSystemAbsDiffForm}).
  Moreover, for any (deterministic) $t > 0$,
  \begin{equation}\label{eq:weakBnds}
    \mathbb{E} \left(\sup_{0 \leq t' \leq \xi \wedge t} |U|^2 + \int_0^{\xi
        \wedge t} \|U\|^2  dt' \right)   < \infty.
  \end{equation}
\end{Prop}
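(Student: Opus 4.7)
I follow the pathwise Galerkin programme of \cite{GlattHoltzZiane2}: construct finite-dimensional approximants, derive uniform $V$-estimates up to a sequence of stopping times, extract a strong solution on the \emph{given} stochastic basis via a pathwise Cauchy property, and then concatenate to a maximal solution with the stated blow-up criterion.

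\emph{Step 1 (Galerkin system and uniform estimates).} For each $n\geq 1$, since $A$ is bounded on $H_n$, $N$ is locally Lipschitz there, and $\sigma$ is Lipschitz from $H$ to $L_2(\mathfrak U,H)$, the finite-dimensional It\={o} equation
\begin{equation*}
dU^{(n)} + P_n\bigl(AU^{(n)}+N(U^{(n)})\bigr)\,dt = P_n F\,dt + P_n\sigma(U^{(n)})\,dW,\quad U^{(n)}(0)=P_nU_0,
\end{equation*}
has a unique continuous global solution. Applying It\={o}'s formula to $\|U^{(n)}\|^2 = (AU^{(n)},U^{(n)})$, bounding $\langle N(U^{(n)}),AU^{(n)}\rangle$ via \eqref{eq:L2NormBUv2}, \eqref{eq:estimateAloworder}, \eqref{eq:CorbndOperator} (with Young's inequality absorbing $\tfrac{\nu}{2}|AU^{(n)}|^2$), and controlling the quadratic variation by $|\sigma(U^{(n)})|^2_{L_2(\mathfrak U,V)}$ using the $Lip_u(V,L_2(\mathfrak U,V))$ part of \eqref{eq:lipCondClass}, yields
\begin{equation*}
d\|U^{(n)}\|^2 + \nu|AU^{(n)}|^2\,dt \leq c\bigl(1+\|U^{(n)}\|^6\bigr)\,dt + c|F|^2\,dt + dM^{(n)}_t,
\end{equation*}
with $M^{(n)}$ a local martingale. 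Setting
\begin{equation*}
\tau^{(n)}_R := \inf\Bigl\{t\geq 0 : \|U^{(n)}(t)\|^2 + \int_0^t|AU^{(n)}|^2\,ds \geq R\Bigr\},
\end{equation*}
the BDG inequality \eqref{eq:BDG} and Gronwall produce $\mathbb{E}\bigl(\sup_{t\leq \tau^{(n)}_R\wedge t_*}\|U^{(n)}\|^2 + \int_0^{\tau^{(n)}_R\wedge t_*}|AU^{(n)}|^2\,ds\bigr)\leq K(R,t_*)$ uniformly in $n$.

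\emph{Step 2 (Pathwise Cauchy estimate --- the main obstacle).} For $m>n$ set $M:=U^{(m)}-U^{(n)}$ and $P^n_m = P_m-P_n$. Subtracting the Galerkin systems gives
\begin{equation*}
\begin{split}
dM + AM\,dt &= -\bigl(N(U^{(m)}) - N(U^{(n)})\bigr)\,dt - P^{n}_{m}\bigl(AU^{(n)} + N(U^{(n)}) - F\bigr)\,dt\\
&\quad + \bigl(\sigma(U^{(m)}) - \sigma(U^{(n)})\bigr)\,dW + P^{n}_{m}\sigma(U^{(n)})\,dW.
\end{split}
\end{equation*}
Applying It\={o}'s formula to $\|M\|^2$, controlling the nonlinear differences by \eqref{eq:3dtypeEstimateFor2DPE}, \eqref{eq:strongTypeEstimate}, \eqref{eq:strongTypeEstimateFirstCompControl} and \eqref{eq:H1EstBU} (a typical bound is $|\langle B(U^{(m)},U^{(m)}) - B(U^{(n)},U^{(n)}),AM\rangle|\leq \tfrac{\nu}{4}|AM|^2 + c(\|U^{(m)}\|^4+\|U^{(n)}\|^4)\|M\|^2$), and time-localizing at $\tilde\tau_R:=\tau^{(m)}_R\wedge\tau^{(n)}_R\wedge t_*$ to absorb the $\|U^{(\cdot)}\|$-factors into the Gronwall constant, one obtains
\begin{equation*}
\mathbb{E}\Bigl(\sup_{t\leq \tilde\tau_R}\|M\|^2 + \int_0^{\tilde\tau_R}|AM|^2\,ds\Bigr) \leq C_R\,\Phi_{m,n}(R),
\end{equation*}
where $\Phi_{m,n}(R)$ gathers the ``stray'' contributions involving $P^n_m$. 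These vanish as $n\to\infty$ thanks to the spectral decay $|P^n_m\phi|\leq \lambda_{n+1}^{-1}|A\phi|$, the uniform Step~1 control of $|AU^{(n)}|$ and $|N(U^{(n)})|$ in $L^2(0,\tilde\tau_R;H)$, and the $Bnd_u(D(A), L_2(\mathfrak U, D(A)))$ hypothesis in \eqref{eq:lipCondClass} applied to $P^n_m\sigma(U^{(n)})$. The limit $U$ inherits the regularity \eqref{eq:Uregularity} on $[0,\tau]$ with $\tau:=\tilde\tau_R$ for $R>\|U_0\|^2+1$, which is strictly positive by path continuity at $0$. The delicacy here is that $M$ is controlled only in $V$ rather than $D(A)$, so the sharper bound \eqref{eq:H1EstBU} and the anisotropic estimates of Lemma~\ref{thm:Best} are what make the closure possible.

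\emph{Step 3 (Uniqueness, maximality, and \eqref{eq:weakBnds}).} Pathwise uniqueness on $[0,\tau]$ follows from the same It\={o}--Gronwall argument applied to the difference of two local solutions, without stray terms. Letting $\xi$ be the essential supremum of lifespans and $\{\tau_n\}$ a natural exhausting sequence, on $\{\xi<\infty\}$ the blow-up condition \eqref{eq:FiniteTimeBlowUp} must hold, since otherwise the limit $U(\xi)\in V$ exists and reapplying Steps~1--2 at this initial value would extend the solution past $\xi$, contradicting maximality. Finally, \eqref{eq:weakBnds} is obtained from It\={o}'s formula for $|U^{(n)}|^2$, where the cancellation $\langle B(U^{(n)},U^{(n)}), U^{(n)}\rangle=0$ of Lemma~\ref{thm:Best}(i) removes the nonlinearity, the bound $|A_pU^{(n)}|\leq c\|U^{(n)}\|$ from \eqref{eq:estimateAloworder} handles the pressure term, and the $Lip_u(H,L_2(\mathfrak U,H))$ hypothesis on $\sigma$ together with BDG and Gronwall close the estimate; Fatou and the Cauchy property pass this bound to $U$ on $[0,\xi\wedge t]$.
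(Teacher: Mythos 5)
Your overall strategy is the paper's: Galerkin approximation, an It\={o} energy identity for $\|U^{(m)}-U^{(n)}\|^2$, localization at stopping times where $\sup_{t'}\|U^{(n)}\|^2+\int|AU^{(n)}|^2$ is bounded, a stochastic Gronwall argument to get the Cauchy property, and then the Jacod-type maximal extension plus the $|U|^2$ cancellation estimate for \eqref{eq:weakBnds}. But there is a genuine gap at the end of your Step~2, and it is exactly the point the paper is organized around. Your Cauchy bound is proved only up to $\tilde\tau_R=\tau^{(m)}_R\wedge\tau^{(n)}_R\wedge t_*$, which depends on the pair $(m,n)$; you then declare the limit to live on ``$[0,\tau]$ with $\tau:=\tilde\tau_R$'', which is not a single stopping time, and the assertion that it is ``strictly positive by path continuity at $0$'' only gives $\tau^{(n)}_R>0$ for each fixed $n$ --- it does not prevent $\inf_n\tau^{(n)}_R$ (or the relevant limiting time) from degenerating to $0$ as $n\to\infty$. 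To extract a subsequence converging a.s. in $C([0,\tau];V)\cap L^2(0,\tau;D(A))$ up to one strictly positive $\tau$, the paper needs, in addition to the Cauchy property \eqref{eq:LocalCauchyCriteria}, the uniform-in-$n$ smallness-near-zero condition \eqref{eq:GrowthCond2} (proved via an It\={o} estimate for $\|U^{(n)}\|^2$ and Doob's inequality), and then invokes the abstract convergence lemma of \cite{GlattHoltzZiane2}. Your proposal contains no analogue of \eqref{eq:GrowthCond2}, so the passage from the pairwise-localized Cauchy estimate to a local solution on a common positive time interval is not justified.

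A secondary but real issue: your choice ``$R>\|U_0\|^2+1$'' and the pathwise absorption of $\exp\bigl(\int(1+|AU^{(m)}|^2)\,ds\bigr)$ into a Gronwall constant implicitly require $\|U_0\|$ to be bounded almost surely, whereas the hypothesis is only $U_0\in L^2(\Omega;V)$. The paper handles this by temporarily imposing \eqref{eq:uniformDataBnd}, which yields the essential-sup bound \eqref{eq:UniformPathwiseBndStrayTm} needed for the stochastic Gronwall lemma, and then removing the restriction at the very end by a decomposition of $\Omega$ according to the size of $\|U_0\|$ (Remark~\ref{r3.1}); your write-up should either include this reduction or restate the localization so that the Gronwall factor is controlled without an a.s. bound on the data. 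The remaining steps (uniqueness via \eqref{eq:3dtypeEstimateFor2DPE}, maximal extension, and \eqref{eq:weakBnds} via the cancellation of Lemma~\ref{thm:Best}(i)) follow the paper's route and are fine in outline.
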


\begin{proof}
  The first step in the proof, to establish certain Cauchy
  estimates for the Galerkin approximations \eqref{eq:galerkinSystem} of \eqref{eq:mainSystemAbsDiffForm}
  is carried out in
  in Lemma~\ref{thm:CompEst}. For the details of the
  passage to limit we refer the reader to
  \cite[Proposition 4.2]{GlattHoltzZiane2} and the remarks thereafter.

  To establish local, pathwise, uniqueness in the sense of
  Definition~\ref{def:solutionNot} we note that the estimate
  \eqref{eq:3dtypeEstimateFor2DPE} of $B$ (in dimension 2) is the same as
  may be achieved for the Navier-Stokes non-linearity in $d =3$.    The
  proof is therefore identical to \cite[Proposition 4.1]{GlattHoltzZiane2}.

  With a local strong solution in hand it remains to extend this
  solution to a maximal existence time $\xi$ as in
  Definition~\ref{def:solutionNot}, (iii).  For this point we may
  employ an argument going back to \cite{Jacod1}. For a more recent
  treatment see \cite[Lemma 4.1, 4.2, Theorem 4.1]{GlattHoltzZiane2}.
  Since we have the cancellation property in $B$
  (Lemma~\ref{thm:CompEst},(i)) the bound on the weak norms up to a possible
  finite time blow up (\ref{eq:weakBnds}) may be established exactly
  as in \cite[Lemma 4.2]{GlattHoltzZiane2}
\end{proof}

\subsection{Local Cauchy Estimates for the Galerkin System}
\label{sec:CompEst}

We turn now to the task of estimating the difference of solutions of
the Galerkin system associated to \eqref{eq:mainSystemAbsDiffForm} at different
orders. We begin by recalling some definitions.  A $\mathcal{F}_{t}$-adapted
process $U^{(n)}  \in L^2(\Omega,C([0,\infty); H_n))$
is a solution of the Galerkin system of order $n$ for \eqref{eq:mainSystemAbsDiffForm} if
it satisfies:
\begin{equation}\label{eq:galerkinSystem}
  \begin{split}
    d U^{(n)} + (A U^{(n)} + P_n N(U^{(n)}))dt
        &= P_{n}F dt + P_{n} \sigma (U^{(n)}) dW,\\
        U^{(n)}(0) &= P_nU_0.
  \end{split}
\end{equation}
Note that by the standard theory of stochastic ordinary
differential equations one may establish the global existence of a unique
solution $U^{(n)}$ at each order.  See e.g. \cite{Flandoli1} for details.

\begin{Prop}\label{thm:CompEst}
  Let $\{U^{(n)}\}_{n \geq 1}$ be the (global) solutions of the Galerkin systems
  \eqref{eq:galerkinSystem} and suppose that there exists a
  deterministic constant $M$ such that
  \begin{equation}\label{eq:uniformDataBnd}
    \|U_{0}\|^2 \leq M \quad a.s.
  \end{equation}
  Then
  \begin{itemize}
  \item[(i)] there exists a stopping time $\tau$, with $\tau > 0$, a
    subsequence $n_j$ and a process $U$ almost surely in
    $C([0,\infty); V) \cap L^{2}_{loc}([0,\infty); D(A))$
    such that:
    \begin{equation}\label{eq:StrongSubConv}
      \lim_{j \rightarrow \infty}
      \sup_{t \in [0,\tau]} \|U^{(n_j)} - U\|^2
       + \int_0^\tau | A (U^{(n_j)} - U)|^2ds  = 0,
    \end{equation}
    almost surely.
  \item[(ii)] for any $p \geq 1$, there exists a sequence of
    $\Omega_{n_j} \in \mathcal{F}_0$, with $\Omega_{n_j} \uparrow
    \Omega$ such that:
   \begin{equation}\label{eq:UniformBnd}
      \sup_j \mathbb{E} \left[\indFn{\Omega_{n_j}} \left( \sup_{t \in
          [0,\tau]} \|U^{(n_j)}\|^2 + \int_0^\tau | A U^{(n_j)}|^2
        ds \right)^{p/2} \right]< \infty
    \end{equation}
    and
    \begin{equation}\label{eq:MeanBndOnCandidateSoln}
      \mathbb{E}\left(\sup_{t \in [0,\tau]} \|U\|^2
        + \int_0^\tau |AU|^2 ds \right)^{p/2} < \infty
    \end{equation}
  \end{itemize}
\end{Prop}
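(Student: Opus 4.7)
The plan is to implement the Cauchy-in-probability strategy of \cite{GlattHoltzZiane2}, showing that $\{U^{(n)}\}$ is Cauchy in $C([0,\tau]; V) \cap L^{2}([0,\tau]; D(A))$ up to a strictly positive deterministic time $\tau$. This circumvents any weak compactness argument that would alter the stochastic basis.

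The first step is to derive uniform a priori bounds. For $R > M$ define
\begin{equation*}
\tau^n_R := \inf\Bigl\{ t \geq 0 : \|U^{(n)}(t)\|^2 + \int_0^t |AU^{(n)}|^2\, ds > R\Bigr\},
\end{equation*}
and apply It\^o's formula to $\|U^{(n)}\|^2$. The cancellation Lemma~\ref{thm:Best}(i) kills the top-order nonlinear contribution, \eqref{eq:L2NormBUv2} together with \eqref{eq:estimateAloworder} and \eqref{eq:CorbndOperator} absorbs the lower-order drift, and \eqref{eq:lipCondClass} controls the quadratic variation from $\sigma$. After absorbing a fraction of $\int|AU^{(n)}|^2$ into the left-hand side and applying BDG \eqref{eq:BDG} with Young and Gr\"onwall, one obtains $p$-th moment bounds on $\sup_{t \leq \tau \wedge \tau^n_R}\|U^{(n)}\|^2 + \int_0^{\tau \wedge \tau^n_R}|AU^{(n)}|^2\, ds$ that are uniform in $n$. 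Using \eqref{eq:uniformDataBnd}, one selects a deterministic $\tau > 0$ and $R_0 > M$ so that $\mathbb{P}(\tau^n_{R_0} < \tau) \to 0$ as $R_0 \to \infty$, uniformly in $n$; a standard initial-data truncation then furnishes the $\mathcal{F}_0$-measurable sets $\Omega_{n_j}\uparrow\Omega$ needed in (ii).

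For the Cauchy estimate, let $\bar U := U^{(m)} - U^{(n)}$ with $m > n$, set $P^n_m := P_m - P_n$, and split
\begin{equation*}
P_m N(U^{(m)}) - P_n N(U^{(n)}) = P_m\bigl(N(U^{(m)}) - N(U^{(n)})\bigr) + P^n_m N(U^{(n)}),
\end{equation*}
and analogously for $\sigma$. Applying It\^o to $\|\bar U\|^2$, stopping at $\tau_* := \tau\wedge\tau^m_{R_0}\wedge\tau^n_{R_0}$, invoking the bilinear Lipschitz bounds drawn from \eqref{eq:3dtypeEstimateFor2DPE}--\eqref{eq:strongTypeEstimateFirstCompControl} and the $V$-Lipschitz half of \eqref{eq:lipCondClass}, and absorbing $|A\bar U|^2$ terms via Young, one obtains an inequality of the form
\begin{equation*}
\mathbb{E}\Bigl(\sup_{t \leq \tau_*}\|\bar U\|^2 + \int_0^{\tau_*}|A\bar U|^2\, ds\Bigr) \leq C\,\mathbb{E}\int_0^{\tau_*} g(s)\|\bar U\|^2\, ds + \mathcal{E}_{m,n},
\end{equation*}
where $g$ is an integrable random weight controlled by the $R_0$-stopped quantities and $\mathcal{E}_{m,n}$ collects the discretization errors arising from $P^n_m N(U^{(n)})$ and $P^n_m \sigma(U^{(n)})$. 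Gr\"onwall reduces the matter to showing $\mathcal{E}_{m,n}\to 0$; Chebyshev and Borel--Cantelli along a rapidly enough growing subsequence then upgrade the resulting $L^2$-convergence to \eqref{eq:StrongSubConv}, while \eqref{eq:UniformBnd} and \eqref{eq:MeanBndOnCandidateSoln} follow from Step~1 combined with Fatou's lemma.

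The hard part is controlling $\mathcal{E}_{m,n}$. Using the spectral-gap bound $|P^n_m \phi|^2 \leq \lambda_{n+1}^{-1}\|\phi\|^2$, the $N$-contribution reduces to uniform integrability of $\|N(U^{(n)})\|^2$ on $[0,\tau_*]$; the dominant term $\|B(U^{(n)},U^{(n)})\|^2$ is precisely what the delicate new $H^1$-bound \eqref{eq:H1EstBU} supplies, while the residual linear pieces are handled by \eqref{eq:estimateAloworder} and \eqref{eq:CorbndOperator}. The $\sigma$-contribution is treated symmetrically via the analogous gap estimate in $L_2(\mathfrak{U},\cdot)$ together with the $Bnd_u(V, L_2(\mathfrak{U}, D(A)))$ half of \eqref{eq:lipCondClass}. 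Since $\lambda_{n+1}\to\infty$, both discretization errors vanish and the Cauchy estimate closes, completing the proof.
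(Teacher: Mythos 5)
Your treatment of the Cauchy step itself is essentially the paper's: the difference equation for $R^{(m,n)}=U^{(m)}-U^{(n)}$ at the $V$ level, the spectral-gap factors coming from the generalized Poincar\'e inequality, the new $H^1$ bound \eqref{eq:H1EstBU} to control the discretization error of the nonlinearity, the Lipschitz/boundedness hypotheses on $\sigma$ combined with BDG, and a stochastic Gronwall lemma --- this is exactly the chain \eqref{eq:DetCauchyEstSummary}--\eqref{eq:FinalInequalityConclusion}. The genuine gap is in your Step 1, and it propagates into the way you produce the positive time $\tau$. The cancellation of Lemma~\ref{thm:Best}(i) is $\langle B(U,U^\sharp),U^\sharp\rangle=0$; it is of no help in the It\=o expansion of $\|U^{(n)}\|^2$, where the nonlinearity is paired against $AU^{(n)}$ and one only has $|\langle B(U),AU\rangle|\le c\|U\|^{3/2}|AU|^{3/2}$, exactly as for 3D Navier--Stokes. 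After Young this produces a superlinear term ($\sim\|U\|^6$) that cannot be closed by Gronwall over a deterministic time interval; the paper stresses in Section~\ref{sec:BasicEst} that it is precisely not known how to choose $\tau>0$ with $\sup_n\E\bigl(\sup_{t\le\tau}\|U^{(n)}\|^2+\int_0^\tau|AU^{(n)}|^2\,ds\bigr)<\infty$. Consequently your claim that, for a fixed deterministic $\tau>0$, $\Prb(\tau^n_{R_0}<\tau)\to 0$ as $R_0\to\infty$ uniformly in $n$ amounts to uniform-in-$n$ stochastic boundedness of $\sup_{t\le\tau}\|U^{(n)}\|^2+\int_0^\tau|AU^{(n)}|^2\,ds$ over a fixed interval, which is exactly the unavailable estimate; nothing in your Step 1 (whose moment bounds are trivial once you stop at $\tau^n_R$) delivers it.

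The correct mechanism, and the one the paper uses as the second hypothesis \eqref{eq:GrowthCond2} of the abstract lemma from \cite{GlattHoltzZiane2}, goes the other way around: the threshold is fixed just above the almost sure bound $M$ on $\|U_0\|^2$ (the stopping times \eqref{eq:UniformLocalExistTms} use $4M$), and the time $\delta$ is sent to $0$, with Doob's inequality and the $\|U\|^{3/2}|AU|^{3/2}$ estimate giving uniformity in $n$; this is where hypothesis \eqref{eq:uniformDataBnd} actually enters, and it is why the limiting time $\tau$ in the conclusion is a genuinely random stopping time rather than a deterministic one, contrary to what your plan asserts. With the Cauchy bound \eqref{eq:LocalCauchyCriteria} and this smallness-in-probability condition in hand, the convergence (i) and the bounds (ii), including the sets $\Omega_{n_j}$, follow from the cited abstract lemma (your Chebyshev/Borel--Cantelli sketch is essentially re-proving it, which is fine in principle); but as written your proposal has no valid route to the strictly positive time on which the Cauchy estimate is run, so the argument does not close.
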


\begin{Rmk}\label{r3.1}
  The technical condition \eqref{eq:uniformDataBnd} is needed
  so that we may obtain the uniform pathwise bound:
  \begin{equation}\label{eq:UniformPathwiseBndStrayTm}
  \begin{split}
    \sup_{m,n}
    \underset{\omega \in \Omega}{\esssup } \left(
        \sup_{0 \leq t' \leq \tau^{M}_{m,n}}
    \|U^{(m)}\|^{2} + \int_{0}^{\tau^{M}_{m,n}}(1 + |AU^{(m)}|^{2})
        ds\right)
    &< \infty.\\
  \end{split}
  \end{equation}
  See \eqref{eq:UniformLocalExistTms},
  \eqref{eq:FinalInequalityConclusion} below.
  Note however that this condition may be removed
  in the final step of the proof of the local existence.  See
  \cite[Proposition 4.2]{GlattHoltzZiane2}.
\end{Rmk}

\begin{proof}
  As in previous work \cite{GlattHoltzZiane2}, the proof consists in
  establishing the sufficient conditions \eqref{eq:LocalCauchyCriteria},
  \eqref{eq:GrowthCond2} for \cite[Lemma
  5.1]{GlattHoltzZiane2} (see also related results in \cite{MikuleviciusRozovskii2}), 
  from which (i) and (ii) follow
  directly.  The proof makes use of some delicate estimates present 
  even in the deterministic case ($\sigma \equiv 0$) that
  have been carried out in a separate work \cite{GlattHoltzTemam1}.

  We assume with no loss of generality that $M > 1$ and consider the stopping times
  \begin{equation}\label{eq:UniformLocalExistTms}
    \tau^{M}_{n}
      = \inf_{t \geq 0} \left\{
       \sup_{t' \in [0,t]} \|U^{(n)}\|^2 + \int_0^t |A U^{(n)}|^2 dt'
          > 4 M
     \right\}.
  \end{equation}
Note that \eqref{eq:UniformLocalExistTms} implies that
\begin{equation}\label{e3.8a}
\sup_{t' \in [0,t]} ||U^{(n)}||^2 + \int^t_0 |AU^{(n)}|^2dt'\leq 4M,
\text{ for } 0\leq t < \tau^M_n.
\end{equation}
  We set $\tau^{M}_{m,n} := \tau^{M}_{n} \wedge \tau^{M}_{m}$.
  The first step in the proof is to perform estimates on
  $U^{(m)} - U^{(n)}$ which we denote by $R^{(m,n)}$ to simplify the
  notation below.  We will show that
\begin{equation}\label{eq:LocalCauchyCriteria}
    \lim_{n \rightarrow \infty} \sup_{m > n}
        \mathbb{E} \left( \sup_{0 \leq t' \leq \tau_{m,n}^M} \|R^{(m,n)} \|^2 +
          \int_0^{\tau_{m,n}^M }| A R^{(m,n)}|^2 dt \right) =0 ,
  \end{equation}
  which is the first condition required for \cite[Lemma 5.1]{GlattHoltzZiane2}.

  We fix $m > n$, subtract the equations for $m, n$, then apply
  $A^{1/2}$ to the resulting system.  Note that $D(A^{1/2}) = V$
  with $\|U\|^2 = |A^{1/2}U|$.  By the It\={o} lemma we may also infer that
  \begin{equation}\label{eq:diffH1Differential}
    \begin{split}
      d \| R^{(m,n)}\|^2
         + & 2| A R^{(m,n)}  |^2 dt \\
         = &- 2\langle P_{m} N(U^{(m)})  - P_{n} N(U^{(n)}),  A R^{(m,n)} \rangle dt\\
       &+ 2 \langle P_{m}^{n} F, A R^{(m,n)}\rangle dt\\
        &+ \| P_m\sigma(U^{(m)}) - P_n\sigma(U^{(n)}) \|^2_{L_2(\mathfrak{U},V)} dt\\
          &+ 2  \langle P_m \sigma(U^{(m)}) - P_n \sigma(U^{(n)}),  A R^{(m,n)}\rangle dW.\\
    \end{split}
  \end{equation}
  We now estimate each of the terms above with a view of finally
  applying a stochastic analogue of the Gronwall inequality,
  \cite[Lemma 5.3]{GlattHoltzZiane2}.   With this in mind fix any pair of
  stopping times $\tau_a, \tau_b$ such that $0
  \leq \tau_a \leq \tau_b \leq \tau^M_{n,m}$. By integrating the above system,
  taking a supremum over the random interval $[\tau_a, \tau_b]$
and finally taking an expected values we may infer that
  \begin{equation}\label{eq:mainInequality}
    \begin{split}
      \E \Biggl( \sup_{t \in [\tau_a, \tau_b]}&\| R^{(m,n)} \|^2
         + \int_{\tau_a}^{\tau_b}  | AR^{(m,n)} |^2 dt \Biggr)\\
         \leq&
         c\E \|R^{(m,n)} (\tau_a)\|^2
         + c \E \int_{\tau_a}^{\tau_b}
        |\langle (P_{m} - P_{n}) F,  A R^{(m,n)}  \rangle| dt\\
         &+  c \E \int_{\tau_a}^{\tau_b}
           |\langle P_{m}N(U^{(m)}) -P_{n}N(U^{(n)}), A R^{(m,n)}
           \rangle| dt\\
      &+ c\E \int_{\tau_a}^{\tau_b}\| P_m\sigma(U^{(m)}) 
           - P_n\sigma(U^{(n)}) \|^2_{L_2(\mathfrak{U},V)} dt\\
         &+ c \E \sup_{t \in [\tau_a,\tau_b]}
         \left|\int_{\tau_a}^t\langle P_m \sigma(U^{(m)}) - P_n
           \sigma(U^{(n)}),  A R^{(m,n)} \rangle dW\right|.\\
    \end{split}
  \end{equation}

  We begin by addressing the `deterministic portions' of
  \eqref{eq:mainInequality}.  Using the equivalence fractional order
  spaces, \eqref{eq:H1EstBU} and the generalized Poincar\'{e}
  inequality it is
  shown in \cite{GlattHoltzTemam1}, (see \eqref{eq:DetCauchyEstSummary} in Theorem 3.1 of \cite{GlattHoltzTemam1})
  that:
  \begin{equation}\label{eq:DetCauchyEstSummary}
    \begin{split}
        |\langle P_{m}N(U^{(m)}) -& P_{n}N(U^{(n)}),A R^{(m,n)}\rangle|\\
        \leq& \frac{1}{2} |A R^{(m,n)}|^{2} +
        c(1+ |AU^{(m)}|^2+ \|U^{(n)}\|^4) \|R^{(m,n)}\|^2\\
        &+\frac{c}{\lambda_n^{1/4}} (1+ \|U^{(n)}\|^2)
               (1 + |AU^{(n)}|^2).
       \end{split}
  \end{equation}
 We next consider the terms which arise only in the stochastic context.
 The It\={o} correction term may be estimated according to
 \begin{equation}\label{eq:itoCorectionComp}
   \begin{split}
   \| P_m \sigma(U^{(m)}) &-
       P_n \sigma(U^{(n)})\|^2_{L_2(\mathfrak{U}, V)}\\
       \leq& c \left(
          \| \sigma(U^{(m)}) -
        \sigma(U^{(n)})\|^2_{L_2(\mathfrak{U}, V)} +
        \| Q_n \sigma(U^{(n)}) \|^2_{L_2(\mathfrak{U}, V)} \right)\\
        \leq& c(\|R^{m,n}\|^2 + \frac{1}{\lambda_n}
        | A \sigma(U^{(n)}) |^2_{L_2(\mathfrak{U}, H)})\\
        \leq& c \left(\|R^{m,n}\|^2
          + \frac{1}{\lambda_n}(1 + |AU^{(n)}|^2)\right)
  \end{split}
\end{equation}
For the second inequality we have made use of the generalized Poincar\'{e} Inequality\footnote{We use the special case
$\|Q_{n}U^{\sharp}\|^{2} \leq \tfrac{1}{\lambda_{n}} |A U^{\sharp}|^{2}$, which holds for any $U^{\sharp} \in D(A)$.}.
The final inequality follows from  (\ref{eq:lipCondClass}).   For the stochastic integral terms we apply \eqref{eq:BDG} and deduce
\begin{equation}  \label{eq:stochasticIntCompEstBDG}
  \begin{split}
          \mathbb{E} & \sup_{\tau_a \leq t' \leq \tau_b}
        \left|\int_{\tau_a}^{t'} \langle P_m\sigma(U^{(m)}) - P_n \sigma(U^{(n)}) , A R^{(m,n)} \rangle
                 dW \right|\\
              \leq& c
      \mathbb{E}
       \left( \int_{\tau_a}^{\tau_b}
         \langle P_m\sigma(U^{(m)}) - P_n \sigma(U^{(n)}) , A R^{(m,n)}
                 \rangle_{L_2(\mathfrak{U}, H)}^2
                dt'\right)^{1/2}\\
            \leq& c
      \mathbb{E}
       \left( \int_{\tau_a}^{\tau_b}
           \| P_m\sigma(U^{(m)})
            - P_n \sigma(U^{(n)})\|^2_{L_2(\mathfrak{U}, V)}
           \| R^{(m,n)} \|^2
               dt'\right)^{1/2}\\
            \leq& c
      \mathbb{E}
       \left(\sup_{t \in [\tau_a, \tau_b]} \| R^{(m,n)} \| \right.\\
         &\quad \quad \quad\left. \cdot \left( \int_{\tau_a}^{\tau_b}
           \| P_m\sigma(U^{(m)})
            - P_n \sigma(U^{(n)})\|^2_{L_2(\mathfrak{U}, V)}
               dt'\right)^{1/2} \right)\\
            \leq& \frac{1}{2}
      \mathbb{E} \left(
        \sup_{t \in [\tau_a, \tau_b]} \| R^{(m,n)} \|^2 \right) \\
          &+ c \E
       \left( \int_{\tau_a}^{\tau_b} (
         \|R^{(m,n)}\|^2 + \frac{1}{\lambda_n}(1 + |AU^{(n)}|^2)
         	)
              dt'\right).\\
 \end{split}
\end{equation}
The last inequality is achieved by applying the Schwarz inequality and
then \eqref{eq:itoCorectionComp}.

We now gather the estimates \eqref{eq:DetCauchyEstSummary},
\eqref{eq:itoCorectionComp}), \eqref{eq:stochasticIntCompEstBDG}
and compare with \eqref{eq:mainInequality}.
Since $0 \leq \tau_a \leq \tau_b \leq \tau^M_{m,n}$ we conclude,
using \eqref{e3.8a} that
\begin{equation}\label{eq:FinalInequalityConclusion}
  \begin{split}
    \E \Biggl( &\sup_{t \in [\tau_a, \tau_b]}\| R^{(m,n)} \|^2
         + \int_{\tau_a}^{\tau_b}  | A R^{(m,n)} |^2 dt \Biggr)\\
       \leq& c\  \E \|R^{(m,n}(\tau_a) \|^2 \\
       &+c\ \E \int_{\tau_a}^{\tau_b}  \left(
              (1 + |AU^{(m)}|^2) \|R^{(m,n)} \|^2+
              \frac{1}{\lambda_n^{1/4}} (1+ |AU^{(n)}|^2) +
              | Q_n F|^2 \right)dt.
  \end{split}
\end{equation}
Observe that the generic constant $c$ is independent of $m,n$
and that \eqref{eq:UniformPathwiseBndStrayTm}
(\ref{eq:LocalCauchyCriteria}) now follows from the stochastic
Gronwall lemma.

It remains to establish the other requirement
of \cite[Lemma 5.1]{GlattHoltzZiane2}. In the present context this translates to
\begin{equation}\label{eq:GrowthCond2}
  \lim_{\delta \rightarrow 0} \sup_{n}
     \Prb \left( \sup_{0 \leq t' \leq \tau^M_n \wedge \delta} \|U^{(n)}\|^2
          + \int_0^{\tau^M_n \wedge \delta} | AU^{(n)}|^2 dt' > \tilde{M} \right)
          = 0,
\end{equation}
for every $\tilde{M} > M$.
By applying It\={o} we infer an equation for $t \mapsto
\|U^{(n)}(t)\|^2$ very similar to
(\ref{eq:AhalfUdifferential}), below. Since, as for the Navier-Stokes
system in $d = 3$ (see (\ref{eq:strongTypeEstimate}))
\begin{displaymath}
  |\langle B(U), AU \rangle| \leq \|U\|^{3/2} |AU|^{3/2},
  \quad U \in D(A)
\end{displaymath}
and since the $A_p$ and $E$ terms are lower order (see
\eqref{eq:estimateAloworder},\eqref{eq:CorbndOperator}), we may establish
(\ref{eq:GrowthCond2}) with a direct application of Doob's inequality
exactly as in \cite[Proposition 3.1]{GlattHoltzZiane2}.  With
\eqref{eq:GrowthCond2}, the proof is complete.
\end{proof}

\section{Global Existence}

We now implement a series of anisotropic estimates that are used
to infer global existence. Due to the non-commutativity
introduced by the physical boundary conditions we must first define
a new variable $\hat{U}$ that satisfies a system obeying the
rules of ordinary calculus.  We are then are able to derive suitable
estimates for $\hat{u}_z$ and then $\hat{u}_x$ and finally for the entire
original system in $V$. Since the
resulting estimates yield only pathwise (rather than moment) bounds
we must finally recourse to some involved stopping time arguments which make
essential use of Lemma~\ref{thm:stArg1}.

\subsection{A Change of Variable to a Random PDE and some Auxiliary Estimates}
\label{sec:change-vari-rand}

We consider the linear stochastic partial differential equation
\begin{subequations}\label{eq:PE2DLinearStoch}
  \begin{gather}
     \pd{t} \check{u} - \nu \Delta \check{u} + \pd{x} \check{p}_s
       =\indFn{t \leq \xi} \sigma_u(\mathbf{v}, T) \dot{W}_1,
    \label{eq:PEMoment1LS}\\
    \pd{t} \check{v} - \nu \Delta \check{v} =\indFn{t \leq \xi} \sigma_v(\mathbf{v},T) \dot{W}_2,
    \label{eq:PEMoment2LS}\\
    \pd{t}\check{T} - \mu \Delta \check{T} = \indFn{t \leq \xi} \sigma_T(\mathbf{v},T) \dot{W}_3,
    \label{eq:PETempCoupleLS}
  \end{gather}
\end{subequations}
with $\xi$ as in Proposition~\ref{thm:MaxExist}.  This system is supplemented with the same boundary
conditions as in \eqref{eq:PE2DBasicBC}. We posit the zero initial condition $\check{u}(0) =
\check{v}(0) = \check{T}(0) = 0$. Note that the stochastic forcing
terms depend on $(U,\xi) = ((\mathbf{v},T), \xi)$, maximal strong
solution solution we found for
(\ref{eq:PE2DBasic})-\eqref{eq:basicInitialCond} in
Proposition~\ref{thm:MaxExist}; $\sigma$ is exactly the same as
appearing in \eqref{eq:PE2DBasic} and in particular satisfies
\eqref{eq:lipCond}. As in Section~\ref{sec:definition-solutions},
\eqref{eq:PE2DLinearStoch} may be formulated in an abstract form:
\begin{equation}\label{eq:auxLinSystemAbs}
  d \check{U} + A \check{U} dt = \indFn{t \leq \xi}  \sigma(U)dW, \quad \check{U}(0) = 0.
\end{equation}
We shall need the following preliminary estimates  below for $\check{U}$.
\begin{Lem}\label{thm:AuxSystemEst}
  There exists a unique global pathwise strong solution of
  (\ref{eq:auxLinSystemAbs}) taking its values in $D(A)$.  Additionally for any
  deterministic finite time $t > 0$, we have
  \begin{equation}\label{eq:checkStrongEstimate}
    \mathbb{E} \left( \sup_{t' \in [0, t]} | A\check{U}|^2 \right) < \infty.
 \end{equation}
\end{Lem}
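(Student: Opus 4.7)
The plan is to construct $\check{U}$ as a limit of Galerkin approximations and then to extract the $D(A)$-regularity directly from It\=o's lemma, exploiting the fact that \eqref{eq:auxLinSystemAbs} is linear with noise depending on the already-given process $U$. First, for each $n$ consider
\begin{equation*}
 d\check{U}^{(n)} + A\check{U}^{(n)}dt = \indFn{t \leq \xi}P_n\sigma(U) dW, \qquad \check{U}^{(n)}(0) = 0.
\end{equation*}
This is a linear SDE on $H_n$ with predictable coefficients which, by \eqref{eq:weakBnds} and the bound $|\sigma(U)|_{L_2(\mathfrak{U},D(A))} \leq c(1 + \|U\|)$ from \eqref{eq:lipCond}, are square-integrable up to any deterministic time. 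Hence a unique global solution exists, explicitly given by the stochastic convolution $\check{U}^{(n)}(t) = \int_0^t e^{-A(t-s)}\indFn{s \leq \xi} P_n\sigma(U)dW$.

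Next I would apply It\=o's formula to $|A\check{U}^{(n)}|^2$; since $P_n$ commutes with $A$ on $H_n$,
\begin{equation*}
 d|A\check{U}^{(n)}|^2 + 2|A^{3/2}\check{U}^{(n)}|^2 dt = 2\indFn{t \leq \xi}\langle P_n A\sigma(U), A\check{U}^{(n)}\rangle dW + \indFn{t \leq \xi}|P_n A\sigma(U)|^2_{L_2(\mathfrak{U},H)}dt.
\end{equation*}
Integrating on $[0,t]$, taking the supremum, and applying the BDG inequality \eqref{eq:BDG} together with Young's inequality to absorb the resulting $\E\sup|A\check{U}^{(n)}|^2$ term, I would obtain
\begin{equation*}
 \E \sup_{s \in [0,t]}|A\check{U}^{(n)}|^2 \leq c\, \E \int_0^{t \wedge \xi} |\sigma(U)|^2_{L_2(\mathfrak{U},D(A))} ds \leq c\, \E \int_0^{t \wedge \xi} (1 + \|U\|^2) ds.
\end{equation*}
The last inequality uses $|P_n A\sigma(U)|_{L_2(\mathfrak{U},H)} \leq |\sigma(U)|_{L_2(\mathfrak{U},D(A))}$ together with $\sigma \in Bnd_u(V, L_2(\mathfrak{U}, D(A)))$ from \eqref{eq:lipCond}. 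By \eqref{eq:weakBnds}, the right-hand side is finite, uniformly in $n$.

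Finally, repeating the same It\=o calculation on the equation satisfied by $\check{U}^{(m)} - \check{U}^{(n)}$ (whose noise is $\indFn{t \leq \xi}(P_m - P_n)\sigma(U)$) and invoking the generalized Poincar\'{e} inequality together with the dominated convergence theorem shows that $\{\check{U}^{(n)}\}$ is Cauchy in $L^2(\Omega; C([0,t]; D(A)))$. The limit $\check{U}$ inherits \eqref{eq:checkStrongEstimate} by Fatou's lemma and satisfies \eqref{eq:auxLinSystemAbs} strongly with $\check{U}(t) \in D(A)$ almost surely. Uniqueness is immediate: the difference of two such solutions satisfies the deterministic homogeneous equation $\partial_t R + AR = 0$ with $R(0)=0$, hence vanishes. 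The main (modest) technical point is to ensure that the stochastic integrals are well-posed in the $D(A)$-valued sense; the essential input is the stronger noise hypothesis \eqref{eq:lipCond} coupled with the $L^2(\Omega \times [0, \xi \wedge t])$ control on $\|U\|$ supplied by \eqref{eq:weakBnds}. No genuinely nonlinear estimate is required because \eqref{eq:auxLinSystemAbs} is linear and all bounds on $\sigma(U)$ depend only on $U$, not on $\check{U}$.
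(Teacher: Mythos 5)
Your proposal is correct and follows essentially the same route as the paper: exploit the linearity of \eqref{eq:auxLinSystemAbs}, apply It\={o}'s formula to $|A\check{U}|^2$, control the martingale term via the BDG inequality \eqref{eq:BDG} and the hypothesis $\sigma \in Bnd_u(V, L_2(\mathfrak{U}, D(A)))$ from \eqref{eq:lipCond}, and close the bound with \eqref{eq:weakBnds}. The only difference is that you carry out the Galerkin construction and the Cauchy/limit passage explicitly, whereas the paper performs the estimate formally and delegates the Galerkin justification to a reference; your added detail is consistent with that scheme (the dominated convergence step alone suffices for the Cauchy property, the generalized Poincar\'e inequality being unnecessary there).
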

\begin{proof}
  We briefly outline the formal estimates that lead to (\ref{eq:checkStrongEstimate}).
  Since \eqref{eq:auxLinSystemAbs} is linear in the unknown
  everything, including the global existence, may be easily justified
  with a suitable Galerkin scheme (see e.g. \cite{Flandoli1}).

  Formally then we multiply \eqref{eq:auxLinSystemAbs} by $A$ and apply the
  It\={o} lemma in $H$ to deduce
  \begin{equation}\label{eq:H2EqnforcheckU}
    \begin{split}
    d | A \check{U}|^2 + 2 &|A^{3/2} \check{U}|^2 dt\\
    =& 2 \indFn{t \leq \xi}\langle  A \sigma(U), A \check{U} \rangle dW
      + \indFn{t \leq \xi} | A\sigma(U) |_{L_2(\mathfrak{U}, H)}^2 dt.\\
   \end{split}
 \end{equation}
  Fixing arbitrary $t > 0$ and taking a supremum over $t' \leq t$ and
  then expected values we infer from \eqref{eq:lipCondClass}, \eqref{eq:H2EqnforcheckU} and
  the fact that $\tilde{U}(0) = 0$,
  \begin{equation}\label{eq:tildeUverystongEstimates}
    \begin{split}
     \mathbb{E}& \left( \sup_{t' \in [0, t]}| A\check{U} |^2 \right)\\
     &\leq \mathbb{E} \sup_{t' \in [0, t]} \left| \int_0^{t' \wedge\xi}\langle
      A\sigma(U), A \check{U} \rangle dW \right|+
      \mathbb{E}\int_0^{t \wedge\xi}| \sigma(U) |_{L_2(\mathfrak{U},D(A))}^2 dt'\\
      &\leq \frac{1}{2} \mathbb{E} \left(
        \sup_{t' \in [0, t]}| A\check{U}|^2 \right)
                 + c \mathbb{E}\int_0^{t \wedge\xi}| \sigma(U)
                 |_{L_2(\mathfrak{U}, D(A)}^2 dt'\\
       &\leq \frac{1}{2} \mathbb{E} \left(
         \sup_{t' \in [0, t]}| A\check{U}|^2 \right)
       +c\mathbb{E} \int_0^{t \wedge \xi} (1+ \|U \|^2)  dt'.
   \end{split}
 \end{equation}
 For the stochastic integral terms after the first inequality we apply \eqref{eq:BDG}
 and then estimate in a similar manner to \eqref{eq:stochasticIntCompEstBDG}.
 The final inequality is a consequence of the assumption \eqref{eq:lipCond} imposed on $\sigma$.
 To complete the proof we rearrange \eqref{eq:tildeUverystongEstimates} and refer to \eqref{eq:weakBnds} in
 Proposition~\ref{thm:MaxExist} to conclude
 \eqref{eq:checkStrongEstimate}.
\end{proof}

We next subtract \eqref{eq:PEMoment1LS} from \eqref{eq:PE2Dreform} and
define $\hat{U} = U - \check{U}$.  On the random interval $[0,
\xi)$ we see that $\hat{U}$ must
satisfy the following partial differential equation (without white
noise driven forcing but with random coefficients)
\begin{equation}\label{eq:UPDEforzEstAbsPre}
  \frac{d}{dt} \hat{U} + A \hat{U} + A_p (\hat{U} + \check{U})
                                 + B(\hat{U} + \check{U})
                                 + E(\hat{U} + \check{U}) = F.
\end{equation}
Note that, in contrast to \eqref{eq:mainSystemAbsDiffForm} this new
system satisfies the usual rules of ordinary calculus.

We may rewrite \eqref{eq:UPDEforzEstAbsPre} in a form more convenient for our purposes below:
\begin{equation}\label{eq:UPDEforzEstAbs}
  \begin{split}
  \frac{d}{dt} \hat{U} + A \hat{U} &+ A_p \hat{U}
                                 + B(\hat{U})
                                 + E \hat{U} \\
                                 =& F
                                 - B(\check{U},\check{U})
                                 - B(\check{U},\hat{U})
                                 - B(\hat{U},\check{U})
                                 - E\check{U} - A_p \check{U}.
 \end{split}
\end{equation}
By combining Lemma~\ref{thm:AuxSystemEst} with Proposition~\ref{thm:MaxExist} we may directly infer that
\begin{Lem}\label{thm:prelimestRandomSystem}
  For any deterministic, finite $t > 0$ we have:
  \begin{equation}\label{eq:weakEstUcheck}
    \mathbb{E} \left( \sup_{0 \leq t' \leq \xi \wedge t} | \hat{U}|^2
    + \int_0^{\xi \wedge t}  \|\hat{U} \|^2 ds\right) < \infty
  \end{equation}
\end{Lem}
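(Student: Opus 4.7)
The plan is to derive the bound directly from the triangle inequality $\hat{U}=U-\check{U}$ together with the two estimates already in hand: the energy-type bound \eqref{eq:weakBnds} for the maximal strong solution $U$ coming from Proposition~\ref{thm:MaxExist}, and the strong bound \eqref{eq:checkStrongEstimate} for the auxiliary process $\check{U}$ established in Lemma~\ref{thm:AuxSystemEst}. No new PDE or stochastic calculation is required; the work is purely arithmetic.

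More precisely, first I would apply the elementary inequalities
\begin{equation*}
|\hat{U}|^{2} \le 2|U|^{2} + 2|\check{U}|^{2},
\qquad
\|\hat{U}\|^{2} \le 2\|U\|^{2} + 2\|\check{U}\|^{2},
\end{equation*}
valid pointwise in $(\omega,t)$ on the random interval $[0,\xi\wedge t]$. Taking the supremum in time of the first inequality and integrating the second over $[0,\xi\wedge t]$, then taking expected values, I reduce the problem to estimating the four resulting terms.

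Next I would control the $U$-contributions directly by \eqref{eq:weakBnds} of Proposition~\ref{thm:MaxExist}, which gives finiteness of $\mathbb{E}(\sup_{0\le t'\le \xi\wedge t}|U|^{2} + \int_{0}^{\xi\wedge t}\|U\|^{2}\,ds)$. For the $\check{U}$-contributions I would use the generalized Poincar\'e inequality in the form $|\check{U}|^{2} \le c\|\check{U}\|^{2} \le c|A\check{U}|^{2}$, valid for $\check{U}\in D(A)$, which yields
\begin{equation*}
\sup_{0\le t'\le \xi\wedge t} |\check{U}|^{2} + \int_{0}^{\xi\wedge t}\|\check{U}\|^{2}\,ds
\;\le\; c(1+t)\sup_{0\le t'\le t}|A\check{U}|^{2}.
\end{equation*}
Taking expectations and invoking \eqref{eq:checkStrongEstimate} of Lemma~\ref{thm:AuxSystemEst} gives finiteness of this term. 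Summing the two contributions produces \eqref{eq:weakEstUcheck}.

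There is no genuine obstacle; the only step that merits a comment is the passage from $\|\check{U}\|$ to $|A\check{U}|$, which is legitimate because $\check{U}$ takes its values in $D(A)$ by Lemma~\ref{thm:AuxSystemEst}. The integral in time over $[0,\xi\wedge t]$ is harmless since $\xi\wedge t \le t$ is deterministically bounded, so pulling the supremum out of the integral loses only a factor of $t$.
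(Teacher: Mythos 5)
Your proposal is correct and matches the paper's argument: the paper proves this lemma simply by "combining Lemma~\ref{thm:AuxSystemEst} with Proposition~\ref{thm:MaxExist}," i.e.\ exactly the decomposition $\hat{U}=U-\check{U}$ with \eqref{eq:weakBnds} controlling $U$ and \eqref{eq:checkStrongEstimate} (via the Poincar\'e-type bounds $|\check{U}|\le c\|\check{U}\|\le c|A\check{U}|$ on $D(A)$) controlling $\check{U}$. You have merely written out the arithmetic the paper leaves implicit, and all steps are valid.
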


Finally we note that the first momentum equation included in
(\ref{eq:UPDEforzEstAbs}), which will be the focus of our attention in
the subsequent sections, is given by
\begin{equation}\label{eq:firstMomentumMinusNoise}
  \begin{split}
      \pd{t} \hat{u}
      &+ \hat{u} \pd{x} \hat{u} + w(\hat{u}) \pd{z} \hat{u}
      - \nu \Delta \hat{u} - f \hat{v}
      + \pd{x} \hat{p}_s
      - \beta_T g \rho_0 \int_z^0  \pd{x} \hat{T} d \bar{z}  \\
    =& F_u + f \check{v}
        +\beta_T g \rho_0 \int_z^0  \pd{x} \check{T} d \bar{z}  \\
    &-( \check{u} \pd{x} \check{u} + w(\check{u}) \pd{z} \check{u} )
    -(  \check{u} \pd{x} \hat{u} + w(\check{u}) \pd{z} \hat{u} )
    -(  \hat{u} \pd{x} \check{u} + w(\hat{u}) \pd{z} \check{u} )\\
    =& F_u + f \check{v}
    +\beta_T g \rho_0 \int_z^0  \pd{x} \check{T}d \bar{z}
    - (\tilde{\mathcal{B}}^1(\check{u}, \check{u}) +
        \tilde{\mathcal{B}}^1(\check{u},\hat{u}) +
        \tilde{\mathcal{B}}^1(\hat{u}, \check{u})).
\end{split}
\end{equation}

\begin{Rmk}\label{r4.1}
We infer from (\ref{eq:weakEstUcheck}) that,
\begin{equation}\label{e4.9a}
\sup_{0\leq t'\leq\xi\wedge t} |\hat{U}|^2 + \int^{\xi\wedge t}_0
||\hat{U}||^2 ds \leq K^1(t,\omega) < \infty
\end{equation}
where here and below, $K, K^i,$ denote a.s. finite constants which depend
on $t$, on the data such as norms of $U_0$, $F$ and on $\omega$ 
through these norms and though stochastic integral terms
driven by $W$.
\end{Rmk}

\subsection{Anisotropic Estimates}
\label{sec:time-unif-estim-uz}
We now turn to the estimates for $\pd{z} \hat{u}$.
\begin{Lem}\label{thm:uzestimatesLemma}
  Let $(U,\xi) = ((u,v,T), \xi)$ be the unique maximal strong
  solution of \eqref{eq:mainSystemAbsDiffForm} guaranteed by Proposition~\ref{thm:MaxExist}.
Then, for every $t >0$ there exists a finite constant $K=K^2(t,\omega) <
\infty$ depending on $t,\omega$ and the data such that
  \begin{equation}\label{e4.10b}
     \sup_{0\leq t'\leq\xi\wedge t} |\pd{z} \hat{u} |^2
      + \int^{\xi\wedge t}_0\| \pd{z} \hat{u} \|^2ds
      \leq K^2  \quad a.s.
 \end{equation}
\end{Lem}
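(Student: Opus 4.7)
The natural strategy is to exploit the fact that $\hat{U}$ satisfies the random PDE \eqref{eq:firstMomentumMinusNoise} on $[0,\xi)$, which obeys the ordinary rules of calculus (no It\=o correction). I would test the first momentum equation for $\hat{u}$ against $Q(-\pd{zz}\hat{u})$. The use of $Q$ is crucial for two reasons: (a) since $\pd{x}\hat{p}_s$ is $z$-independent, $Q$ annihilates the pressure term, bypassing the boundary obstruction identified in the introduction; (b) since the first component of $B$ already takes values in the range of $Q$ (see \eqref{eq:NLTerm1}, \eqref{eq:NLTerm}), the inner product of $B^1(\hat{u},\hat{u})$ with $Q(-\pd{zz}\hat{u})$ coincides with $\langle B^1(\hat{u},\hat{u}), -\pd{zz}\hat{u}\rangle$, which makes the sharp identity \eqref{eq:semiCancelpdz} and the estimate \eqref{eq:semiCancelpdzEst} directly applicable.

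After integration by parts, using the boundary conditions $\pd{z}\hat{u} = -\alpha_{\mathbf{v}}\hat{u}$ on $\Gamma_i$ and $\hat{u}=0$ on $\Gamma_l \cup \Gamma_b$, the linear portion yields a coercive energy structure of the form
\begin{equation*}
\tfrac{1}{2}\tfrac{d}{dt}\bigl(|\pd{z}\hat{u}|^2 + \alpha_{\mathbf{v}}|\hat{u}|^2_{L^2(\Gamma_i)}\bigr) + \tfrac{\nu}{2}\|\pd{z}\hat{u}\|^2 \leq |\mathrm{RHS}|,
\end{equation*}
up to norm equivalence and some trace-type contributions on $\Gamma_b$ that are absorbed via the trace theorem. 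On the right-hand side I collect: (i) the linear lower-order terms from $-f\hat{v}$, the temperature coupling $\int_z^0\pd{x}\hat{T}\,d\bar{z}$, $F_u$, and the $A_p\check{U}$, $E\check{U}$ pieces, treated by Cauchy--Schwarz and Young; (ii) the main nonlinearity $\langle B^1(\hat{u},\hat{u}), -\pd{zz}\hat{u}\rangle$, bounded by \eqref{eq:semiCancelpdzEst} and split via Young's inequality so that $\tfrac{\nu}{4}\|\pd{z}\hat{u}\|^2$ is reabsorbed on the left and the remainder takes the form $c|\hat{u}|^{2/3}\|\hat{u}\|^{2}(1+|\pd{z}\hat{u}|^2)$; and (iii) the three cross terms $\tilde{\mathcal{B}}^1(\check{u},\check{u})$, $\tilde{\mathcal{B}}^1(\check{u},\hat{u})$, $\tilde{\mathcal{B}}^1(\hat{u},\check{u})$ induced by the splitting $u=\hat{u}+\check{u}$.

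For the cross terms I would decompose each $\tilde{\mathcal{B}}^1$ into its $B_1^1$ and $B_2^1$ components and apply the anisotropic bounds \eqref{eq:genericFirstCompEstL2ClassComp1}--\eqref{eq:genericFirstCompEstL2FuckedComp2}. The essential input is Lemma~\ref{thm:AuxSystemEst}, which provides $\sup_{t'\in[0,t]}|A\check{U}|^2 \leq K(t,\omega)<\infty$ a.s.; this pathwise $D(A)$ bound on $\check{U}$ lets me absorb every $\check{u}$-factor into a random constant while peeling $\pd{zz}\hat{u}$ and $\pd{xz}\hat{u}$ off into the coercive $\tfrac{\nu}{4}\|\pd{z}\hat{u}\|^2$ on the left. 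Collecting everything I arrive at an inequality of the form
\begin{equation*}
\tfrac{d}{dt} y + c\|\pd{z}\hat{u}\|^2 \leq \alpha(t,\omega)(1 + y), \qquad y := |\pd{z}\hat{u}|^2,
\end{equation*}
on $[0,\xi\wedge t]$, where $\alpha$ is a.s.\ locally integrable by virtue of the weak energy bound $\int_0^{\xi\wedge t}\|\hat{U}\|^2\,ds \leq K^1(t,\omega)$ from Lemma~\ref{thm:prelimestRandomSystem} together with the pathwise boundedness of $|\hat{U}|$ and of $|A\check{U}|$. A pathwise Gronwall argument then closes the proof.

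The principal obstacle will be the anisotropic bookkeeping for the three cross $\tilde{\mathcal{B}}^1$ terms: at this stage of the argument we have no $H^2$ control of $\hat{u}$ (that is exactly what the subsequent estimate on $\pd{x}\hat{u}$, and then on the full $V$ norm, will establish), so every factor requiring $H^2$ regularity must be placed on the $\check{u}$-slot, where it is supplied by Lemma~\ref{thm:AuxSystemEst}. A secondary delicate point is the use of \eqref{eq:semiCancelpdzEst}, which is precisely the accounting of the ``boundary defect'' in the cancellation of $\langle B^1(u,u),-\pd{zz}u\rangle$ and is what makes the physical Robin/Dirichlet boundary conditions tractable in this anisotropic estimate.
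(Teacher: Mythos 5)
Your proposal follows essentially the same route as the paper's proof: testing \eqref{eq:firstMomentumMinusNoise} against $-Q\pd{zz}\hat{u}$ (so that $Q$ kills the surface pressure and the cancellation identity \eqref{eq:semiCancelpdz}, \eqref{eq:semiCancelpdzEst} applies to $B^1(\hat{u},\hat{u})$), handling the boundary/trace contributions and the lower-order terms by Young's inequality, estimating the three cross terms in $\check{u}$, $\hat{u}$ with the anisotropic bounds \eqref{eq:genericFirstCompEstL2ClassComp1}--\eqref{eq:genericFirstCompEstL2FuckedComp2} while placing all $H^2$ factors on $\check{u}$ via Lemma~\ref{thm:AuxSystemEst}, and closing with a pathwise Gronwall argument using \eqref{e4.9a} and \eqref{eq:checkStrongEstimate}. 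This matches the paper's argument in both structure and the key inputs, so the proposal is correct.
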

\begin{proof}
  We multiply \eqref{eq:firstMomentumMinusNoise} by $-Q \pd{zz}\hat{u}$ and
  integrate over the domain $\mathcal{M}$.  Following closely the computations
  in \cite{PetcuTemamZiane} we to deduce:
  \begin{equation}\label{eq:QuzzevolutionEqnRandomPDE}
    \begin{split}
      \frac{1}{2}  \frac{d}{dt}
      \bigl( |\pd{z}\hat{u}|^2
        &+ \alpha_{\mathbf{v}} | \hat{u} |^2_{L^2(\Gamma_i)}\bigr)
           + \nu \| \pd{z}\hat{u} \|^2
           + \nu \alpha_{\mathbf{v}} |\pd{x}\hat{u}|^2_{L^2(\Gamma_i)}\\
      =& |P \pd{zz} \hat{u}|^2
           - \int_{\mathcal{M}}F_uQ \pd{zz}\hat{u} \, d\mathcal{M}\\
        &- \beta_T g \rho_0
              \int_{\mathcal{M}} \left( \int^0_z
              \pd{x}(\hat{T} + \check{T})d\bar{z} \right)Q\pd{zz}\hat{u} \, d\mathcal{M}\\
        &-  \int_{\mathcal{M}}f(\hat{v} +\check{v})Q \pd{zz}\hat{u} \, d\mathcal{M}\\
        &+ \frac{2}{h} \int_{\mathcal{M}} \hat{u} \pd{x}\hat{u}
              \left[\alpha_{\mathbf{v}} \hat{u}(0,x) + \pd{z}\hat{u}(x,-h) \right]
              \, d \mathcal{M}\\
        &+ \int_{\mathcal{M}} ( B^1(\check{u}, \check{u})
                  + B^1(\check{u},\hat{u})
                  + B^1(\hat{u}, \check{u}))\pd{zz}\hat{u}
                d\mathcal{M}\\
       &= J_1 + J_2 + J_3  + J_4 + J_5 + J_6 + J_7 + J_8.
   \end{split}
\end{equation}
Here the bottom
boundary is flat which causes several terms to disappear
present in \cite{PetcuTemamZiane}.  The term
\begin{displaymath}
  -\langle B^1(\hat{u}, \hat{u}), - \pd{zz} \hat{u} \rangle
  = \int (\hat{u} \pd{x} \hat{u} + w( \hat{u}) \pd{z} \hat{u})
  \mathit{Q} \pd{zz}\hat{u} \, d \mathcal{M}
\end{displaymath}
largely cancels and appears as $J_5$ due to Lemma~\ref{thm:Best}, (vi)
above.  Also we observe that $Q \pd{x} \hat{p}_s =0$ which is why we
multiply (\ref{eq:firstMomentumMinusNoise}) by $-Q\pd{zz}\hat{u}$
rather than $-\pd{zz} u$.

The first term $J_{1}$ on the right hand side of \eqref{eq:QuzzevolutionEqnRandomPDE}
reduces to two terms at $z=-h$ and $0$ that are estimated using the trace theorem:
\begin{equation}\label{eq:J1Estdz}
    |J_1| \leq
    c \|\hat{U} \|^2+
    \frac{\nu}{16} \|\pd{z}\hat{u} \|^2.
\end{equation}
The estimates for the next three terms are direct:
\begin{equation}\label{eq:J1234Estdz}
  \begin{split}
    |J_2| \leq&
    c |F|^2 +
    \frac{\nu}{16} \|\pd{z}\hat{u}\|^2,\\
    |J_3| \leq &
    c(\|\hat{U}\|^2 + \|\check{U}\|^2)  +
   \frac{\nu}{16} \|\pd{z}\hat{u} \|^2\\
   \leq &
   c(\|\hat{U}\|^2 + |\check{U}|^2_{(2)})  +
  \frac{\nu}{16} \|\pd{z}\hat{u} \|^2,\\
    |J_4| \leq &
    c(\|\hat{U}\|^2 + \|\check{U}\|^2) +
    \frac{\nu}{16} \|\pd{z}\hat{u} \|^2\\
    \leq &
   c(\|\hat{U}\|^2 + |\check{U}|^2_{(2)})  +
  \frac{\nu}{16} \|\pd{z}\hat{u} \|^2.\\
  \end{split}
\end{equation}
For $J_5$ we may estimate using
\eqref{eq:semiCancelpdzEst} and Young's inequality
\begin{equation}\label{eq:J5Estdz}
  \begin{split}
  |J_5| \leq&
  c (|\hat{u}| \| \hat{u} \|^2
             +|\pd{z}\hat{u}|^{1/2} \|\pd{z}\hat{u}\|^{1/2}
               |\hat{u}|^{1/2} \|\hat{u} \|^{3/2})\\
      \leq&
        c (|\hat{U}| \|\hat{U}\|^2 +
        |\pd{z}\hat{u}|^{2/3} |\hat{U}|^{2/3}
        \|\hat{U}\|^2)
        + \frac{\nu}{16} \|\pd{z}\hat{u} \|^2\\
      \leq&
        c (|\hat{U}| \|\hat{U}\|^2 +
        |\pd{z}\hat{u}|^{2}
        \|\hat{U}\|^2)
        + \frac{\nu}{16} \|\pd{z}\hat{u} \|^2.\\
\end{split}
\end{equation}
For $J_6$,  \eqref{eq:genericFirstCompEstL2ClassComp1},
\eqref{eq:genericFirstCompEstL2FuckedComp2} allow
\begin{equation}\label{eq:checkchecknonlinearTerm}
  \begin{split}
  |J_6| \leq&
     c (|\check{u}|^{1/2} \| \check{u} \| + \| \check{u} \|^{3/2}) | \check{u} |^{1/2}_{(2)}
          | \pd{zz} \hat{u} |\\
          \leq&
             c \|\check{u}\| | \check{u} |_{(2)} | \pd{zz} \hat{u} |\\
          \leq&
     c \| \check{U} \|^2 |\check{U}|^2_{(2)} + \frac{\nu}{16} \| \pd{z} \hat{u} \|^2.
  \end{split}
\end{equation}
For $J_7$ we estimate with \eqref{eq:genericFirstCompEstL2ClassComp1} and
\eqref{eq:genericFirstCompEstL2FuckedComp2}:
\begin{equation}\label{eq:J7uzzEasyPart}
  \begin{split}
    |J_{7}| \leq&
       c (|\check{u} |^{1/2} |\check{u} |_{(2)}^{1/2} \| \hat{u}\|
          +\|\check{u} \|^{1/2} |\check{u} |_{(2)}^{1/2}
          |\pd{z} \hat{u}|) | \pd{zz} \hat{u} |\\
      \leq&c |\check{U} |_{(2)}^2 (\| \hat{U} \|^2 + |\pd{z} \hat{u}|^2)
         +\frac{\nu}{16} \| \pd{z} \hat{u} \|^2.\\
  \end{split}
\end{equation}
Finally concerning $J_8 = \langle B^1_1(\hat{u}, \check{u}) +B^1_2(\hat{u}, \check{u}),\pd{zz}\hat{u} \rangle := J_{8,1} + J_{8,2} $ we estimate
\begin{equation}\label{eq:termJ8uzzEasyPart}
  \begin{split}
  |J_{8,1}|
      \leq& c | \hat{u} |^{1/2} \|\hat{u} \|^{1/2}
                \|\check{u} \|^{1/2} |\check{u} |_{(2)}^{1/2}
                \|\pd{z}\hat{u} \|\\
      \leq& c (| \hat{U} |^{2} \|\hat{U} \|^{2} +
                \|\check{U} \|^{2} |\check{U} |_{(2)}^{2} ) +
                \frac{\nu}{32}\|\pd{z}\hat{u} \|^2,\\
 \end{split}
\end{equation}
using \eqref{eq:genericFirstCompEstL2ClassComp2}, and,
\begin{equation}\label{eq:J82BIGPROBLEMHOUSTON}
  \begin{split}
    |J_{8,2}| &\leq c \| \hat{u} \| \| \check{u} \|^{1/2}
                            | \check{u} |_{(2)}^{1/2} \|\pd{z}\hat{u}\|\\
                 &\leq c \|\hat{U}\|^{2}  | \check{U} |_{(2)}^2
                  +\frac{\nu}{32} \|\pd{z}\hat{u}\|^2.\\
 \end{split}
\end{equation}
thanks to (\ref{eq:genericFirstCompEstL2FuckedComp1}).

Collecting the estimates \eqref{eq:J1Estdz}, \eqref{eq:J1234Estdz}, \eqref{eq:J5Estdz},
\eqref{eq:checkchecknonlinearTerm},  \eqref{eq:J7uzzEasyPart},
\eqref{eq:termJ8uzzEasyPart}, \eqref{eq:J82BIGPROBLEMHOUSTON}
above we may finally observe that
\begin{equation}\label{eq:diffinequalityduz}
  \begin{split}
\frac{d}{dt}
      \bigl( |\pd{z}&\hat{u}|^2
        + \alpha_{\mathbf{v}} | \hat{u} |^2_{L^2(\Gamma_i)}\bigr)
           + \nu \| \pd{z}\hat{u} \|^2\\
        \leq&
        c( \| \hat{U} \|^2  + |\check{U}|^2_{(2)} ) |\pd{z} \hat{u}|^2\\
        &+c (1+ |\hat{U}|^2)\| \hat{U}\|^2  +
                     c(1+ \|\hat{U} \|^2 + \| \check{U} \|^2 )
                     |\check{U} |^2_{(2)}
                 +c|F|^2.\\
  \end{split}
\end{equation}
We therefore conclude that
\begin{equation}\label{eq:gronwallsetUpdUZ}
  \begin{split}
  \frac{d}{dt}
      \bigl( |\pd{z}\hat{u}|^2
        + \alpha_{\mathbf{v}} | \hat{u} |^2_{L^2(\Gamma_i)}\bigr)
        \leq
        (|\pd{z} \hat{u}|^2 + \alpha_{\mathbf{v}} | \hat{u} |^2_{L^2(\Gamma_i)}) R_1 + R_2 + C|F|^2,
 \end{split}
\end{equation}
where
\begin{equation}\label{eq:GronwallTermsDef}
  \begin{split}
    R_1 &:= \| \hat{U} \|^2  + |\check{U}|^2_{(2)}\\
    R_2 &:= c(1+ |\hat{U}|^2)\| \hat{U}\|^2
         + c(1+ \|\hat{U} \|^2 + \| \check{U}\|^2 ) |\check{U} |^2_{(2)}
 \end{split}
\end{equation}
and the constants $c$ are as in \eqref{eq:diffinequalityduz}.
Note that, due to \eqref{e4.9a} and \eqref{eq:checkStrongEstimate}, for all $t > 0$,
there exists a constant $K=K(t,\omega)$ such that,
\begin{equation}\label{eq:PathwiseBndsR1R2pdz}
    \int_{0}^{t\wedge\xi} R_{j} ds \leq K(t,\omega) < \infty \;\; a.s.
    \quad j = 1,2.
\end{equation}
The (deterministic) Gronwall inequality now yields
\begin{equation}\label{eq:GronwallInequalUzzEst}
  \begin{split}
  \sup_{t' \in [0, \tau_n \wedge t]} |\pd{z} \hat{u}|^2
    &\leq   \sup_{t' \in [0, \tau_n \wedge t]} (|\pd{z} \hat{u}|^2 +
        \alpha_{\mathbf{v}} | \hat{u} |^2_{L^2(\Gamma_i)})\\
    &\leq \exp \left(\int_0^{\xi\wedge t} R_1 dt' \right)
       \left(  | \pd{z} u_0|^2 + \int_0^{\xi\wedge t} (R_2 + C|F|^2 )dt' \right)\\
    & \leq K(t, \omega) \left( 1 + \|U_0\|^2 + \int_0^{\xi\wedge t} |F|^2 dt' \right).
 \end{split}
\end{equation}
Finally, returning to (\ref{eq:diffinequalityduz}), integrating over $[0,
\tau_n \wedge t]$, and
then neglecting the terms  $|\pd{z}\hat{u}|^2 +
\alpha_{\mathbf{v}} | \hat{u} |^2_{L^2(\Gamma_i)}$ appearing on the
left hand side of the resulting expression,  we observe that:
\begin{equation}\label{eq:tmIntViscosityTrmUz}
  \begin{split}
  \int_0^{\xi\wedge t} \| \pd{z} \hat{u} \|^2 dt'
     \leq& \|U_0\|^2 +  \int_0^{\xi\wedge t} (|\pd{z} \hat{u}|^2 R_1 + R_2
     + c|F|^2 )dt'\\
     \leq& K(t,\omega).
 \end{split}
\end{equation}
Combining (\ref{eq:GronwallInequalUzzEst}) and
(\ref{eq:tmIntViscosityTrmUz}), completes the proof.
\end{proof}

We next come to the estimates for $\pd{x}u$.    Here we show
\begin{Lem}\label{thm:uxestimatesLemma}
The hypotheses are the same as in Lemma \ref{thm:uzestimatesLemma}.
Then, for every $t > 0$, there exists a finite constant $K=K^3(t,\omega) <
\infty$  depending on $t,\omega$ and the data such that
  \begin{equation}\label{e4.24b}
      \sup_{0 \leq t' \leq \xi\wedge t} |\pd{x} \hat{u} |^2
      + \int_0^{\xi\wedge t} \|\pd{x} \hat{u}\|^2 dt'
      \leq K^3 \quad a.s.
 \end{equation}
\end{Lem}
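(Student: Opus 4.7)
The plan is to mirror the strategy of Lemma~\ref{thm:uzestimatesLemma} by testing the first momentum equation in \eqref{eq:firstMomentumMinusNoise} against $-\pd{xx}\hat{u}$ and integrating over $\mathcal{M}$. Since $\hat{u} = u - \check{u}$ inherits the boundary conditions of $u$ together with the constraint $\int_{-h}^0 \hat{u}\,dz = 0$, the surface pressure contribution vanishes identically (because $\int_{-h}^0 \pd{xx}\hat{u}\,dz = 0$), so no $\mathit{Q}$-projector on the multiplier is required. Integrating the viscous term by parts and using $\hat{u}=0$ on $\Gamma_l\cup\Gamma_b$ together with the Robin condition on $\Gamma_i$ differentiated in $x$, one produces
\[
\tfrac{1}{2}\tfrac{d}{dt}|\pd{x}\hat{u}|^2 + \nu\|\pd{x}\hat{u}\|^2 + \nu\alpha_{\mathbf{v}}|\pd{x}\hat{u}|^2_{L^2(\Gamma_i)} = \text{(linear and nonlinear terms)}.
\]
The Coriolis, external-forcing, temperature-coupling, and mixed $\check{u}$-nonlinear contributions on the right-hand side are handled in direct analogy with the $J_i$'s in the proof of Lemma~\ref{thm:uzestimatesLemma}, using Lemma~\ref{thm:Best} (particularly \eqref{eq:strongTypeEstimate}, \eqref{eq:genericFirstCompEstL2ClassComp1}, \eqref{eq:genericFirstCompEstL2FuckedComp2}) and the pathwise bound $|\check{U}|_{(2)} \in L^\infty_t$ furnished by \eqref{eq:checkStrongEstimate}.

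The principal obstacle lies in the purely $\hat{u}$ nonlinearity $\langle B^1(\hat{u},\hat{u}), -\pd{xx}\hat{u}\rangle$. After integration by parts it reduces essentially to the cubic quantity $\int (\pd{x}\hat{u})^3\,d\mathcal{M}$ plus a remainder of the form $\int \bigl(\int_z^0 \pd{xx}\hat{u}\,d\bar z\bigr)\pd{z}\hat{u}\,\pd{x}\hat{u}\,d\mathcal{M}$; no structural cancellation analogous to Lemma~\ref{thm:Best}(vi) is available, and a naive 2D Ladyzhenskaya bound would produce a $c\,|\pd{x}\hat{u}|^4$ growth term, leading to a Ricatti-type inequality that cannot be closed. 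The crucial step is instead to invoke the anisotropic estimates \eqref{eq:genericFirstCompEstL2ClassComp2} and \eqref{eq:genericFirstCompEstL2FuckedComp1} with $u^\flat = -\pd{xx}\hat{u}$, followed by Young's inequality, to obtain
\[
|\langle B^1(\hat{u},\hat{u}), -\pd{xx}\hat{u}\rangle| \leq \tfrac{\nu}{4}\|\pd{x}\hat{u}\|^2 + c\bigl(|\hat{U}|^2\|\hat{U}\|^2 + |\pd{z}\hat{u}|^2\|\pd{z}\hat{u}\|^2\bigr)|\pd{x}\hat{u}|^2.
\]
Crucially, the coefficients of $|\pd{x}\hat{u}|^2$ are almost surely integrable on $[0,\xi\wedge t]$: $|\hat{U}|$ is pathwise bounded and $\|\hat{U}\|\in L^2_t$ a.s.\ by Lemma~\ref{thm:prelimestRandomSystem} (Remark~\ref{r4.1}), while $|\pd{z}\hat{u}|$ is pathwise bounded and $\|\pd{z}\hat{u}\|\in L^2_t$ a.s.\ precisely by the previously-established Lemma~\ref{thm:uzestimatesLemma}. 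This is where the prior bound on $\pd{z}\hat{u}$ is indispensable.

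Gathering all bounds yields, in analogy with \eqref{eq:diffinequalityduz}--\eqref{eq:PathwiseBndsR1R2pdz}, a pathwise differential inequality
\[
\tfrac{d}{dt}|\pd{x}\hat{u}|^2 + \tfrac{\nu}{2}\|\pd{x}\hat{u}\|^2 \leq \tilde R_1\,|\pd{x}\hat{u}|^2 + \tilde R_2,
\]
with $\int_0^{\xi\wedge t}(\tilde R_1 + \tilde R_2)\,ds < \infty$ almost surely. A direct application of the deterministic Gronwall inequality delivers the bound on $\sup_{t' \in [0,\xi\wedge t]}|\pd{x}\hat{u}(t')|^2$, and integrating the differential inequality once more over $[0,\xi\wedge t]$ produces the $\int_0^{\xi\wedge t}\|\pd{x}\hat{u}\|^2\,dt'$ bound, completing the proof of \eqref{e4.24b}.
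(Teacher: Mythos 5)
Your proposal follows essentially the same route as the paper: multiply \eqref{eq:firstMomentumMinusNoise} by $-\pd{xx}\hat{u}$, note that the surface pressure drops out since the vertical average of $\pd{xx}\hat{u}$ vanishes, estimate the purely $\hat{u}$ nonlinearity with exactly the anisotropic bounds \eqref{eq:genericFirstCompEstL2ClassComp2}, \eqref{eq:genericFirstCompEstL2FuckedComp1}, treat the $\check{u}$-coupled and linear terms via Lemma~\ref{thm:Best} and \eqref{eq:checkStrongEstimate}, and close with the deterministic Gronwall inequality using the a.s.\ integrability of the coefficients furnished by \eqref{e4.9a} and Lemma~\ref{thm:uzestimatesLemma}. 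The only (harmless) deviation is the coefficient $|\pd{z}\hat{u}|^2\|\pd{z}\hat{u}\|^2$ you write for the $B^1_2(\hat{u},\hat{u})$ term, where Young's inequality naturally produces $|\pd{z}\hat{u}|\,\|\pd{z}\hat{u}\|$; both are a.s.\ integrable on $[0,\xi\wedge t]$, so the conclusion is unaffected.
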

\begin{proof}
The hypotheses being the same as for Lemma 4.3, the conclusions of
that Lemma thus hold, and in particular (\ref{e4.10b}).

To determine an evolution equation for $|\pd{x} \hat{u}|$ we multiply
  (\ref{eq:firstMomentumMinusNoise}) by $-\pd{xx} u$ and integrate over $\mathcal{M}.$
  After some direct manipulations, this yields
  \begin{equation}\label{eq:evolutionequationuxhat}
   \begin{split}
     \frac{1}{2} \frac{d}{dt} | &\pd{x} \hat{u}|^2
       + \nu \| \pd{x} \hat{u}\|^2
         + \nu \alpha_{\mathbf{v}} | \pd{x} \hat{u} |^2_{L^2(\Gamma_i)}\\
          =& \beta_T g \rho_0
           \int_{\mathcal{M}} \left(
           \int^0_z \pd{x}(\hat{T} + \check{T}) d\bar{z}  \right)
           \pd{xx}\hat{u} \, d\mathcal{M}\\
     &-  \int_{\mathcal{M}}F_u\pd{xx}\hat{u} \, d\mathcal{M}\\
    &-  \int_{\mathcal{M}}2f(\hat{v} +\check{v}) \pd{xx}\hat{u} \, d\mathcal{M}\\
     &+ \int_{\mathcal{M}}  (B^1(\hat{u}, \hat{u}) +
            B^1(\check{u},\check{u})+
            B^1(\check{u},\hat{u}) +
            B^1(\hat{u}, \check{u}))
     \pd{xx}\hat{u} \, d\mathcal{M}\\
       =& J_1 + J_2 + J_3 + J_4 + J_5 + J_6 + J_7.
      \end{split}
   \end{equation}
   Notice that in this case the pressure term disappears by
   integration in $z$, since $P
   \pd{xx} \hat{u} = 0$

   As above the first three terms are direct
   \begin{equation}\label{eq:J1234Estdx}
     \begin{split}
       |J_1| \leq&
       c(\|\hat{U}\|^2 + \|\check{U}\|^2) +
       \frac{\nu}{14} \|\pd{x}\hat{u} \|^2,\\
       |J_2| \leq&
       c |F|^2 +
       \frac{\nu}{14} \| \pd{x}\hat{u} \|^2,\\
       |J_3| \leq &
       c (\|\hat{U}\|^2 + \|\check{U}\|^2)  +
       \frac{\nu}{14} \| \pd{x}\hat{u} \|^2.\\
     \end{split}
   \end{equation}
   We may handle the term $J_4$ as in \cite{TemamZiane1}, however we
   may also directly apply Lemma~\ref{thm:Best},
   (\ref{eq:genericFirstCompEstL2ClassComp2}),
   (\ref{eq:genericFirstCompEstL2FuckedComp1}) to infer
   \begin{equation}\label{eq:nonlinear1Bux}
     \begin{split}
       |J_4|
          \leq&
          c ( |\hat{u}|^{1/2} \| \hat{u} \|^{1/2}
             |\pd{x}\hat{u}|^{1/2} \| \pd{x}\hat{u} \|^{3/2}
              +  |\pd{x} \hat{u} |
                    |\pd{z} \hat{u} |^{1/2}
                    \|\pd{z} \hat{u} \|^{1/2}
                    \|\pd{x}\hat{u}\| )\\
          \leq&c(|\hat{u}|^{2} \|\hat{u} \|^{2}
                       |\pd{x}\hat{u}|^{2} +
                       |\pd{x} \hat{u} |^2
                       |\pd{z} \hat{u} |
                       \|\pd{z} \hat{u} \|)
                    + \frac{\nu}{14} \| \pd{x}\hat{u} \|^2\\
          \leq&c(|\hat{U}|^2 \|\hat{U} \|^2 +
                    \|\pd{z} \hat{u} \|^2)|\pd{x} \hat{u} |^2
                   + \frac{\nu}{14} \| \pd{x}\hat{u} \|^2.\\
 \end{split}
  \end{equation}
  The estimates   (\ref{eq:genericFirstCompEstL2ClassComp1}) -
   (\ref{eq:genericFirstCompEstL2FuckedComp2}) allow us to treat the
  remaining terms $J_5, J_6, J_7$ as well.  Indeed
  \begin{equation}\label{eq:nolinearuxJ5}
    \begin{split}
      |J_5| \leq&
      c ( | \check{u} |^{1/2}
      | \check{u} |_{(2)}^{1/2}
      \| \check{u} \|
     +
       \| \check{u} \|^{3/2}
       |\check{u} |_{(2)}^{1/2})
     \| \pd{x}\hat{u} \|\\
     \leq&c \| \check{U} \|^2 |\check{U}|^2_{(2)}
                + \frac{\nu}{14} \| \pd{x}\hat{u} \|^2.\\
  \end{split}
 \end{equation}
 Also
 \begin{equation}\label{eq:estimateuxJ6}
   \begin{split}
     | J_6| \leq& c
         ( |\check{u} |^{1/2} | \check{u} |_{(2)}^{1/2}
          |\pd{x} \hat{u} |
         +
          \|\check{u} \|^{1/2} | \check{u} |_{(2)}^{1/2}
          |\pd{z} \hat{u} | ) \| \pd{x}\hat{u} \|\\
          \leq&c  |\check{U}|_{(2)}^2 (|\pd{x} \hat{u} |^2 + |\pd{z} \hat{u} |^2)
                     + \frac{\nu}{14} \| \pd{x}\hat{u} \|^2\\
     \leq&c \| \hat{U} \|^2 |\check{U}|^2_{(2)}
               + \frac{\nu}{14} \| \pd{x}\hat{u} \|^2.\\
   \end{split}
 \end{equation}
 Finally
 \begin{equation}\label{eq:estimateuxJ7}
   \begin{split}
     |J_7| &\leq c
       (  | \hat{u} |^{1/2} \| \hat{u} \|^{1/2}
        \| \check{u} \|^{1/2}  | \check{u} |_{(2)}^{1/2} +
          | \pd{x} \hat{u} |
         \| \check{u} \|^{1/2} | \check{u} |_{(2)}^{1/2})
         \| \pd{x} \hat{u} \|\\
         &\leq
         c \| \check{u} \| | \check{u} |_{(2)}
              (| \hat{u} | \| \hat{u} \| + | \pd{x} \hat{u} |^2)
              + \frac{\nu}{14} \| \pd{x} \hat{u} \|^2\\
         &\leq
         c \| \hat{U} \|^2 | \check{U} |_{(2)}^2
              + \frac{\nu}{14} \| \pd{x} \hat{u} \|^2.\\
  \end{split}
 \end{equation}
Gathering the estimates above, we conclude that:
\begin{equation}\label{eq:estuxsummary}
  \begin{split}
    \frac{d}{dt} | \pd{x} \hat{u}|^2 &+ \nu \| \pd{x} \hat{u}\|^2\\
       \leq& c(|\hat{U}|^2 \|\hat{U} \|^2 +
                    \|\pd{z} \hat{u} \|^2) | \pd{x} \hat{u} |^2 \\
              & +c(\|\hat{U}\|^2 + \|\check{U}\|^2  + \| \check{U} \|^2 |\check{U}|^2_{(2)}
                        +\| \hat{U} \|^2 |\check{U}|^2_{(2)} )
                 + c |F|^2\\
        \leq& R_3 | \pd{x} \hat{u} |^2+ R_4 + c|F|^2,
 \end{split}
\end{equation}
where $R_3 := c(|\hat{U}|^2 \|\hat{U} \|^2 + \|\pd{z} \hat{u} \|^2)$
and $R_4 := c(\|\hat{U}\|^2 + \|\check{U}\|^2  + \| \check{U} \|^2
|\check{U}|^2_{(2)} +\| \hat{U} \|^2 |\check{U}|^2_{(2)} )$.
Dropping the term $\nu \| \pd{x} \hat{u} \|^2$, applying the
Gronwall inequality and then making use of the assumed bound
\eqref{e4.24b} we infer, using \eqref{e4.9a},
\eqref{eq:checkStrongEstimate} and \eqref{e4.10b}, that
\begin{equation}\label{eq:uniformbnduxl2}
  \begin{split}
    \sup_{0\leq t' \leq \xi\wedge t} |\pd{x} \hat{u}|^2
          \leq & \exp\left( \int_0^{\xi\wedge t} R_3 dt'\right)
           \left( |\pd{x} u_0 |^2 + \int_0^{\xi\wedge t} (R_4 + C|F|^2)dt'\right) \\
          \leq & K(t,\omega) < + \infty .
 \end{split}
\end{equation}
We then integrate \eqref{eq:estuxsummary} from $0, \xi\wedge t$ and
infer, using again \eqref{e4.9a}, \eqref{eq:checkStrongEstimate} and
\eqref{e4.10b}, that
\begin{equation}\label{eq:uniformBnduxH1}
  \begin{split}
    \int_0^{\xi\wedge t} \|\pd{x}\hat{u}\|^2 dt'
       &\leq \|U_0\|^2 +
             \int_0^{\xi \wedge t} (
             R_3 |\pd{x} \hat{u}|^2 + R_4 + |F|^2 )dt'\\
       &\leq K(t,\omega) < \infty ,
     \end{split}
\end{equation}
where the final inequality follows from the previous bound
\eqref{eq:uniformbnduxl2}.  This completes the proof of
Lemma~\ref{thm:uxestimatesLemma}.
\end{proof}

\begin{Rmk} \label{rmk:BndMods}
  With some minor modifications to the proof,
  Lemma~\ref{thm:uxestimatesLemma} may
  established if we merely assume that,
  \begin{equation}\label{eq:ptwiseControlBndsMod}
    \begin{split}
              \sup_{t' \leq \tau_n} \left( | \hat{U} |^2 + \| \check{U} \|^2 
               + |\pd{z} \hat{u} |^2 \right) + \int_0^{\tau_n} (\|\hat{U} \|^2 + 
                      |\check{U}|_{(2)}^2+
                    \|\pd{z} \hat{u} \|^2) dt'  \leq K < \infty \quad a.s.
   \end{split}
  \end{equation}
  On the other hand the proof of Lemma~\ref{thm:uzestimatesLemma} seems to require
  that 
  \begin{equation}\label{eq:StrongerCondOnUCheck}
     \sup_{t' \leq \xi}| \check{U}|_{(2)}^2 \leq K  < \infty
  \end{equation}
  This condition 
  is needed to order handle $J_{8}$ appearing in \eqref{eq:QuzzevolutionEqnRandomPDE}. 
  The requirement \eqref{eq:StrongerCondOnUCheck} is achieved due to
  \eqref{eq:checkStrongEstimate} but at the cost of a slightly more
  restrictive condition on $\sigma$, \eqref{eq:lipCond}, as
  compared to previous work.   We underline here that this is the only point in this work
  where we require the final condition in (\ref{eq:lipCond}).
 \end{Rmk}

\begin{Rmk}\label{r4.3}
We observe that the $H^1$ -norm $||\varphi||$ of a function
$\varphi$ is equivalent to the norm $(|\varphi|^2 +
|\partial_x\varphi|^2 + |\partial_z\varphi|^2)^{1/2},$ and the $H^2$
- norm $|\varphi|_{(2)}$ of $\varphi$ is equivalent to the norm
$(\|\partial_x\varphi \|^2 + \| \partial_z\varphi \|^2 + ||\varphi||^2)^{1/2}.$  We then infer from \eqref{e4.9a} and
Lemmas \ref{thm:uzestimatesLemma} and \ref{thm:uxestimatesLemma}, that $\hat{u}$ being
as in these lemmas, that for every $t>0,$ there exists
a constant $K=K^4(t,\omega)$ depending on $t,\omega$ and the data, such that
\begin{equation}\label{e4.37}
\sup_{0\leq t'\leq\xi\wedge t} ||\hat{u}||^2 + \int^{\xi\wedge t}_0 |\hat{u}|^2_{(2)} ds\leq K^4 < \infty\quad a.s.
\end{equation}

\end{Rmk}

\subsection{Strong estimates for U}
\label{sec:strong-type-estim}
With the above preliminaries now in hand we may now proceed to study
$U$ in the strong norms, the final step of the proof of global existence.
\begin{Lem}\label{thm:StrongEstU}
  Suppose that $0 < n < \infty$ is a deterministic constant and let $\tau_n \leq \xi$ be the
  stopping time defined by
  \begin{equation}\label{eq:controlBoundStrongFirstComp}
    \tau_n = \inf \left\{ t \geq 0: \int_0^{\xi\wedge t} |u|_{(2)}^2 dt' > n\right\} \wedge\xi .
  \end{equation}
  Then, for any $t > 0$ there exists a deterministic constant $K=K^5_n(t)$ depending on $n$,$t$ and the data,
  such that:
  \begin{equation}\label{eq:meanboundWControl}
    \E \left(
    \sup_{0\leq t' \leq \tau_n\wedge t} \| U \|^2
        + \int_{0}^{\tau_n\wedge t} |A U |^2 dt'
    \right)\leq
    K^5_n(t).
  \end{equation}
\end{Lem}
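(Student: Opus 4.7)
The plan is to apply It\=o's formula to $\|U\|^{2}=|A^{1/2}U|^{2}$, derive a stochastic Gronwall-type inequality with a random coefficient whose expectation is controlled on $[0,\tau_{n}\wedge t]$ thanks to the truncation defining $\tau_{n}$, and then invoke the abstract stopping time result Proposition~\ref{thm:stArg1} to extract the desired moment bound. Starting from \eqref{eq:mainSystemAbsDiffForm}, the It\=o formula for $|A^{1/2}U|^{2}$ (legitimate in view of the regularity \eqref{eq:Uregularity}) yields
\begin{equation*}
  d\|U\|^{2}+2|AU|^{2}\,dt
   = -2\langle N(U),AU\rangle\,dt + 2\langle F,AU\rangle\,dt
     +\|\sigma(U)\|^{2}_{L_{2}(\mathfrak{U},V)}\,dt + dM_{t},
\end{equation*}
with $dM_{t}=2\langle A^{1/2}\sigma(U),A^{1/2}U\rangle\,dW$.

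For the drift the dangerous contribution is the nonlinearity, which I would control via \eqref{eq:strongTypeEstimateFirstCompControl} applied with $U^{\sharp}=U$ and $U^{\flat}=AU$, so that together with Young's inequality
\begin{equation*}
  |\langle B(U,U),AU\rangle| \leq c\|u\|^{1/2}|u|_{(2)}^{1/2}\|U\||AU|
    \leq \tfrac{1}{8}|AU|^{2}+c\|u\||u|_{(2)}\|U\|^{2}.
\end{equation*}
The lower order operators $A_{p}$ and $E$ are absorbed using \eqref{eq:estimateAloworder} and \eqref{eq:CorbndOperator}; the forcing by Cauchy--Schwarz; the It\=o correction by \eqref{eq:lipCond}. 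After absorbing the various $|AU|^{2}$-terms on the left, I would arrive at
\begin{equation*}
  d\|U\|^{2}+|AU|^{2}\,dt \leq R_{t}\|U\|^{2}\,dt + G_{t}\,dt + dM_{t},
\end{equation*}
with $R_{t}=c(1+\|u\||u|_{(2)})$, $G_{t}=c(1+|F|^{2})$, and $d\langle M\rangle_{t}\leq c(1+\|U\|^{2})\|U\|^{2}\,dt$. The point of $\tau_{n}$ is that $\int_{0}^{\tau_{n}\wedge t}|u|_{(2)}^{2}\,ds\leq n$ pathwise, so by Cauchy--Schwarz combined with \eqref{eq:weakBnds},
\begin{equation*}
  \E\int_{0}^{\tau_{n}\wedge t}R_{s}\,ds
   \leq ct + c\sqrt{n}\,\Bigl(\E\int_{0}^{\tau_{n}\wedge t}\|U\|^{2}\,ds\Bigr)^{1/2}
   \leq C(n,t).
\end{equation*}

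The proof would then be concluded by applying Proposition~\ref{thm:stArg1} to this stochastic differential inequality, together with the BDG inequality to handle the martingale $M$, yielding the claimed estimate $\E(\sup_{[0,\tau_{n}\wedge t]}\|U\|^{2}+\int_{0}^{\tau_{n}\wedge t}|AU|^{2}\,ds)\leq K^{5}_{n}(t)$. The main obstacle is precisely that $\int_{0}^{\tau_{n}\wedge t}R_{s}\,ds$ is only almost surely finite (with finite first moment) rather than bounded by a deterministic constant: a naive deterministic Gronwall would produce a pathwise bound involving $\exp(\int R)$ whose expectation need not be finite. The abstract tool Proposition~\ref{thm:stArg1}, announced in the introduction as being designed to streamline exactly this kind of step, is what bridges this gap via a further stopping time argument carried out once and for all at the abstract level.
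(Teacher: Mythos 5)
Your It\={o} computation and the treatment of the individual terms (the nonlinearity via \eqref{eq:strongTypeEstimateFirstCompControl}, the lower order operators via \eqref{eq:estimateAloworder}, \eqref{eq:CorbndOperator}, the It\={o} correction via \eqref{eq:lipCond}, the martingale via BDG) match the paper's argument up to \eqref{eq:FinalEstU}. The gap is in how you close the estimate. Proposition~\ref{thm:stArg1} cannot do the job you assign to it: its conclusion is only that $X(t,\omega)<\infty$ almost surely, i.e.\ a pathwise finiteness statement, and it produces no moment bound, let alone the deterministic constant $K^5_n(t)$ demanded by \eqref{eq:meanboundWControl}. Worse, its hypothesis is $\sup_M \E X(\tau_n\wedge\sigma_M\wedge t)<\infty$, which is essentially the content of the present lemma; in the paper that proposition is invoked only afterwards, in Section 4.4, \emph{using} Lemma~\ref{thm:StrongEstU} as input, so appealing to it here is circular.

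The tool the paper actually uses is the stochastic Gronwall lemma of \cite[Lemma 5.3]{GlattHoltzZiane2}, recalled at the end of the proof: if the estimate \eqref{eq:FinalEstU} holds for \emph{every} pair of stopping times $0\leq\tau_a\leq\tau_b\leq\tau_n\wedge t$ with a constant independent of the pair, and if the random Gronwall coefficient satisfies a \emph{pathwise deterministic} bound $\int_0^{\tau_n\wedge t} R\,ds\leq k$ a.s., then one gets $\E(\sup X+\int Y)\leq C\,\E(X(0)+\int Z)$ with $C=C(C_0,t,k)$ deterministic. This is exactly where the definition \eqref{eq:controlBoundStrongFirstComp} of $\tau_n$ enters: with $R=c(1+|u|_{(2)}^2)$ one has $\int_0^{\tau_n\wedge t}R\,ds\leq c(t+n)$ almost surely. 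You noted the pathwise bound $\int_0^{\tau_n\wedge t}|u|_{(2)}^2\,ds\leq n$ but then deliberately weakened it, via Cauchy--Schwarz and \eqref{eq:weakBnds}, to a bound on $\E\int_0^{\tau_n\wedge t}R\,ds$; a finite first moment of $\int R$ is not sufficient for any Gronwall-type argument (as you yourself observe, $\exp(\int R)$ need not be integrable), and no stopping-time device at the level of Proposition~\ref{thm:stArg1} recovers the lost information. To repair the proof: keep the pathwise bound by $t+n$, state your differential inequality in the form of \eqref{eq:FinalEstU} over arbitrary $\tau_a\leq\tau_b\leq\tau_n\wedge t$ (absorbing the half of $\sup\|U\|^2$ coming from BDG requires exactly this two--stopping--time formulation), and apply \cite[Lemma 5.3]{GlattHoltzZiane2} with $X=\|U\|^2$, $Y=|AU|^2$, $R=c(1+|u|_{(2)}^2)$, $Z=c(1+|F|^2)$.
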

\begin{proof}
  By the It\={o} formula and truncation argument (see
  \cite{Bensoussan1}) we derive an equation for $t \mapsto \|U(t)\|$:
 \begin{equation}\label{eq:AhalfUdifferential}
    \begin{split}
      d \|U\|^2 +& 2|AU|^2dt \\
             =& (2\langle F -  A_pU - B(U) - EU, AU \rangle
             + \|\sigma(U)\|^2_{L_2(\mathfrak{U}, V)} )dt\\
               &+ 2\langle A^{1/2}\sigma(U) , A^{1/2} U \rangle dW.
    \end{split}
  \end{equation}
  Note that due to Proposition~\ref{thm:MaxExist} this equality holds on the
  interval $[0, \xi)$.

  Fix arbitrary stopping times $0 \leq \tau_a \leq \tau_b \leq \tau_n
  \wedge t$.
  We now make estimates of \eqref{eq:AhalfUdifferential} on this interval
  in order to apply the stochastic version of the Gronwall lemma in \cite[Lemma 5.3]{GlattHoltzZiane2}.  As typical,
  the stochastic terms are majorized by applying the
  Burkholder-Davis-Gundy inequality \eqref{eq:BDG},
  \begin{displaymath}
    \begin{split}
      \mathbb{E} \sup_{\tau_a \leq t' \leq \tau_b} &
        \left|\int_{\tau_a}^{t'} \langle A^{1/2}\sigma(U) , A^{1/2} U \rangle
                 dW \right|\\
              &\leq c\ \mathbb{E}
       \left( \int_{\tau_a}^{\tau_b}
         \langle A^{1/2}\sigma(U) , A^{1/2} U
                 \rangle_{L_2(\mathfrak{U}, H)}^2
                dt'\right)^{1/2}\\
           &\leq \frac{1}{2} \mathbb{E} \left(
                   \sup_{\tau_a \leq t' \leq \tau_b } \|U\|^2 \right)
                       + c\ \mathbb{E} \int_{\tau_a}^{\tau_b} (1 + \|U\|^2)ds.
    \end{split}
  \end{displaymath}
  By applying \eqref{eq:strongTypeEstimateFirstCompControl} we may estimate the
  nonlinear part of the equation
  \begin{displaymath}
    |\langle B(U), AU \rangle|
      \leq c \|u\|^{1/2} |u|^{1/2}_{(2)} \|U\| |AU|
      \leq c |u|_{(2)}^2 \|U\|^2  +  \frac{1}{4}|AU|^2
  \end{displaymath}
  Making use of these two observations and obvious applications of
  Young's inequality for the lower order terms (see
  \eqref{eq:estimateAloworder}, \eqref{eq:CorbndOperator} ) we may estimate
  \begin{equation}\label{eq:FinalEstU}
    \begin{split}
      \mathbb{E}& \left(
        \sup_{ \tau_a \leq t' \leq \tau_b} \|U\|^2
                       + \int_{\tau_a}^{\tau_b} |AU|^2 dt'
        \right)\\
        &\leq c\ \E \| U(\tau_a)\|^2 + c\ \E
        \int_{\tau_a}^{\tau_b} (1+|F|^2 + (1 + |u|_{(2)}^2) \|U\|^2)dt'.
    \end{split}
  \end{equation}
The Gronwall lemma in \cite{GlattHoltzZiane2} applies to real valued, non-negative processes $X,Y,Z,R$ defined on
an interval of time $[0,T),$ and such that, for a stopping time $0<\tau <T,$

\begin{equation*}
\mathbb{E}\int^\tau_0 (RX + Z) ds < \infty,
\end{equation*}
and such that $\int^\tau_0 R ds \leq k$ a.s.  Assuming that, for all stopping times $0\leq\tau_a <\tau_b<\tau$

\begin{displaymath}
  \mathbb{E}(\sup_{\tau_{a} < t <\tau_{b}} X + \int^{\tau_b}_{\tau_a} Yds) \leq
C_0 \left( \mathbb{E}( X(\tau_a) + \int^{\tau_b}_{\tau_a} (RX + Z)ds\right)
\end{displaymath}
where $C_0$ is a constant independent of the choice of $\tau_a$ and $\tau_b$, then
\begin{equation*}
\mathbb{E}\left( \sup_{0 < t < \tau} X + \int^\tau_0 Yds\right) \leq C\mathbb{E}\left( X(0) + \int^\tau_0 Zds\right),
\end{equation*}
where $C = C (C_0,T,K).$  We now just apply this lemma with $\tau = \tau_n, X =||U||^2, Y=|AU|^2, R=
c(1+|u|^2_{(2)}), Z = c (1+|F|^2)$ and the result follows.
\end{proof}

\subsection{Stopping time arguments}
\label{sec:stopp-time-argum}

We now implement the stopping time arguments that, applied in
combination with Lemmas~\ref{thm:AuxSystemEst} - \ref{thm:StrongEstU}, imply that $\xi = \infty$.

We define the stochastic processes
\begin{equation}\label{eq:normsofPossibleBlowup}
  \begin{split}
  X_1(t) &:= \sup_{0 \leq t' \leq t \wedge \xi} | \pd{z} \hat{u}|^2
                   + \int_0^{t \wedge \xi} \| \pd{z} \hat{u}\|^2 dt'\\
  X_2(t) &:= \sup_{0 \leq t' \leq t \wedge \xi} | \pd{x} \hat{u}|^2
                  + \int_0^{t \wedge \xi} \| \pd{x} \hat{u}\|^2 dt'\\
 X(t) &:=\sup_{0 \leq t' \leq t \wedge \xi} \| U\|^2
                + \int_0^{t \wedge \xi} |A U |^2 dt'\\
\end{split}
\end{equation}
and recall, with Lemmas \ref{thm:uzestimatesLemma} and \ref{thm:uxestimatesLemma} that $X_1(t)$ and
$X_2(t)$ are almost surely finite for all $t \geq 0$.  For $X(t),$ it follows from Lemma
\ref{thm:StrongEstU} that $X(t)$ is a.s. finite for every $t \in [0, \tau_{n}]$ where $\tau_n$ is
defined by \eqref{eq:controlBoundStrongFirstComp}.

We first aim to show that $\tau_n\uparrow\infty$ a.s. as $n\rightarrow\infty.$  Recalling that 
$u=\hat{u} + \check{u}$, we observe that $|u|_{(2)}^2\leq 2|\hat{u}|_{(2)} + 2|\check{u}|^2_{(2)}$ 
and infer, with Chebyshev's inequality, that for any $t > 0$,
\begin{displaymath}
\begin{split}
\mathbb{P} (\tau_n < t) &\leq \mathbb{P}\left(\int^{\xi\wedge t}_0|u|^2_{(2)} ds > n\right)\\
&\leq \mathbb{P}\left(\int^{\xi\wedge t}_0 |\hat{u}|^2_{(2)} ds >\frac{n}{2}\right) + \mathbb{P}
\left(\int^{\xi\wedge t}_0 |\check{u}|^2_{(2)} ds > \frac{n}{2}\right)\\
&\leq\mathbb{P}\left( X_1(t) + X_2(t) > cn\right) + \frac{c}{n}\mathbb{E}\int^t_0|\check{u}|^2_{(2)}ds.
\end{split}
\end{displaymath}
Thanks to \eqref{eq:checkStrongEstimate} this implies that
\begin{displaymath}
\lim_{n\rightarrow\infty} \mathbb{P}(\tau_n < t ) \leq \mathbb{P}(X_1(t) + X_2(t) = \infty)=0.
\end{displaymath}
Observing that the sequence $\tau_n$ is a.s. increasing, we have
\begin{displaymath}
\mathbb{P} \left(\lim_{n\rightarrow\infty}\tau_n < t\right) = \lim_{n\rightarrow\infty}\mathbb{P}
(\tau_n <  t) = 0,
\end{displaymath}
and hence $\tau_n\uparrow\infty$ a.s. as $n\rightarrow\infty.$

We now consider, for any $M > 0,$ the stopping time
\begin{displaymath}
\sigma_M = \inf \left\{ r \geq 0 :   X(r) > M\right\}
\end{displaymath}
and, in view of applying Proposition 5.1 below we want to evaluate $\mathbb{E} X (\tau_n\wedge\sigma_M\wedge t).$
To this end, we employ Lemma~\ref{thm:StrongEstU}  and infer that
\begin{displaymath}
  \sup_M \E X( \tau_n \wedge \sigma_M \wedge t)
   \leq K^5_n(t) < \infty.
\end{displaymath}
We finally conclude, by invoking Proposition~\ref{thm:stArg1},
that $X(t) < \infty$ for any $t >0$.  This implies
\begin{equation}\label{eq:finiteBlowUpCondOnStrongNorms}
  X(\xi(\omega)) < \infty
  \textrm{ for a.a. } \omega \in \{ \xi < \infty \}
\end{equation}
but since $(U,\xi)$ is a maximal strong solution (cf.
(\ref{eq:FiniteTimeBlowUp})), we perforce conclude that $\xi = \infty$
a.s.  The proof of Theorem~\ref{thm:MainReslt} is thus complete.

\section{Appendix I: An Abstract Stopping Time Result}
\label{sec:append-abstr-lemm}

We have made use of the following new result in the
final steps of the proof above of global existence.

\begin{Prop}\label{thm:stArg1}
  Fix $(\Omega,\mathcal{F}, \mathbb{P}, \{ \mathcal{F}_t\}_{t \geq
    0})$, a filtered probability space.  Let $X: \Omega \times
  [0,\infty) \rightarrow \mathbb{R}^+ \cup \{\infty \}$ be an
  increasing c\'{a}dl\'{a}g stochastic process and define
  \begin{displaymath}
    \sigma_M = \inf\{r \geq 0: X(r) \geq M \}.
  \end{displaymath}
  Suppose that there exists an increasing sequence of stopping times $\tau_n$ such
  that $\tau_n \uparrow \infty$ a.s. and such that for any fixed $n > 0$, $t >0$:
  \begin{displaymath}
    \kappa_{n,t} := \sup_M \mathbb{E} X(\tau_n\wedge\sigma_M\wedge t) < \infty.
  \end{displaymath}
  Then, for a set $\tilde{\Omega} \subset \Omega$ of full measure,
  \begin{equation}\label{eq:noBLOWUPBABY}
    X(t, \omega) < \infty, \quad \textrm{ for all } t \in [0, \infty), \omega \in \tilde{\Omega}.
  \end{equation}
\end{Prop}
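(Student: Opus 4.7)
The plan is to exploit the increasing, c\`adl\`ag structure of $X$ to convert the uniform-in-$M$ bound on $\mathbb{E} X(\tau_n \wedge \sigma_M \wedge t)$ into a Markov-type estimate on $\mathbb{P}(\sigma_M \leq \tau_n \wedge t)$, then send $M \to \infty$, $n \to \infty$, and finally upgrade the ``each $t$'' statement to a ``for all $t$'' statement via monotonicity.

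The first step is an observation about $\sigma_M$. Because $X$ is increasing and right-continuous, if $\sigma_M(\omega) \leq s$ then either $\sigma_M(\omega) < s$, in which case $X(s,\omega) \geq X(r,\omega) \geq M$ for some $r < s$, or $\sigma_M(\omega) = s$, in which case a minimizing sequence $r_k \downarrow s$ with $X(r_k,\omega) \geq M$ combined with right-continuity gives $X(s,\omega) \geq M$. Conversely $X(s,\omega) \geq M$ trivially forces $\sigma_M(\omega) \leq s$. Hence, pathwise,
\begin{equation*}
  \{ \sigma_M \leq \tau_n \wedge t\} = \{ X(\tau_n \wedge t) \geq M\}.
\end{equation*}

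Second, I would note that on $\{\sigma_M \leq \tau_n \wedge t\}$, using that $X$ is increasing, $X(\tau_n \wedge \sigma_M \wedge t) = X(\sigma_M) \geq M$. Combining with the Markov inequality and the hypothesis,
\begin{equation*}
  M \cdot \mathbb{P}(\sigma_M \leq \tau_n \wedge t)
  \leq \mathbb{E}\bigl[ X(\tau_n \wedge \sigma_M \wedge t) \mathbf{1}_{\{\sigma_M \leq \tau_n \wedge t\}} \bigr]
  \leq \mathbb{E} X(\tau_n \wedge \sigma_M \wedge t) \leq \kappa_{n,t}.
\end{equation*}
Thus $\mathbb{P}(X(\tau_n \wedge t) \geq M) \leq \kappa_{n,t}/M$. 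Since the events $\{X(\tau_n \wedge t) \geq M\}$ are decreasing in $M$ with intersection $\{X(\tau_n \wedge t) = \infty\}$, sending $M \to \infty$ gives $\mathbb{P}( X(\tau_n \wedge t) = \infty) = 0$ for every fixed $n$ and $t$.

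Third, since $\tau_n \uparrow \infty$ almost surely, for any fixed $t > 0$ there is an a.s.\ finite (random) $N(\omega)$ such that $\tau_n(\omega) \geq t$ for all $n \geq N(\omega)$, hence $X(\tau_n \wedge t) = X(t)$ eventually. Therefore $X(t) < \infty$ almost surely for each fixed $t > 0$. Finally, to get a single null set working for all times simultaneously, I would apply this to $t = k \in \mathbb{N}$ and use monotonicity of $X$: setting $\tilde{\Omega} := \bigcap_{k \in \mathbb{N}} \{X(k) < \infty\}$, which has full measure, any $t \in [0, \infty)$ satisfies $t \leq k$ for some integer $k$, whence $X(t, \omega) \leq X(k, \omega) < \infty$ for all $\omega \in \tilde{\Omega}$. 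This yields~\eqref{eq:noBLOWUPBABY}.

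The main technical point is the identity $\{\sigma_M \leq \tau_n \wedge t\} = \{X(\tau_n \wedge t) \geq M\}$, which is where the c\`adl\`ag plus increasing assumption is used in an essential way; once this is in hand everything reduces to a routine Chebyshev-plus-monotone-convergence argument. Note that no martingale or optional-stopping machinery is required: the monotonicity of $X$ already encodes ``finite-time blow-up is a tail event.''
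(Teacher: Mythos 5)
Your proposal is correct and follows essentially the same route as the paper's proof: a Chebyshev/Markov bound on $\mathbb{E}\,X(\tau_n\wedge\sigma_M\wedge t)$ combined with the right-continuity-plus-monotonicity fact that $X$ is at least $M$ at the hitting time $\sigma_M$, followed by sending $M\to\infty$, then $n\to\infty$ via $\tau_n\uparrow\infty$, and finally intersecting full-measure sets over integer times using monotonicity of $X$. The only differences are bookkeeping (you bound $\mathbb{P}(X(\tau_n\wedge t)\geq M)$ directly, while the paper bounds $\mathbb{P}(\sigma_M<t)$ with an additive $\mathbb{P}(\tau_n<t)$ term), plus the minor remark that your step ``$X(\sigma_M)\geq M$'' uses right-continuity, not just monotonicity, exactly as in your preceding case analysis.
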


\begin{proof}
  It is sufficient to show that $\lim_{M \rightarrow \infty}
  \mathbb{P}(\sigma_M < t) = 0$.  Indeed since
  \begin{displaymath}
    \{ X(t) < M \} \subseteq \{\sigma_M \geq t \}
  \end{displaymath}
  and since $\sigma_M$ is an increasing function of $M$, for any $M' > M$,
  \begin{displaymath}
    \{\sigma_{M} \geq t\} \subseteq \{ \sigma_{M'} \geq t\},
  \end{displaymath}
  we have that
  \begin{displaymath}
    \begin{split}
    \mathbb{P} ( X(t) < \infty) &=
      \mathbb{P} ( \cup_{M > 0} \{X(t) < M\})\\
    &\leq
      \mathbb{P} ( \cup_{M > 0} \{\sigma_M \geq t \})\\
    &=
     \lim_{M \rightarrow \infty}  \mathbb{P} ( \sigma_M \geq t )\\
    &=
     \lim_{M \rightarrow \infty} (1 - \mathbb{P} ( \sigma_M < t )).\\
    \end{split}
  \end{displaymath}

  Give any $M,n$, observe that since $X$ is right continuous and increasing,
    \begin{displaymath}
    \begin{split}
      \{\sigma_M < t, \tau_n \geq t\}
      &= \{X(\sigma_{M} \wedge t) \geq M, \sigma_{M} < t, \tau_n \geq t \}\\
      &\subseteq \{X(\sigma_{M} \wedge t) \geq M, \tau_n \geq t \}\\
      &\subseteq \{X(\sigma_{M} \wedge \tau_n \wedge t) \geq M\},\\
    \end{split}
  \end{displaymath}
  and therefore
  \begin{displaymath}
    \begin{split}
      \mathbb{P} (\sigma_M < t)
         & \leq \mathbb{P} (\sigma_M < t, \tau_n \geq t) +
                \mathbb{P} (\tau_n < t)\\
         & \leq \mathbb{P}(X(\sigma_M \wedge \tau_n \wedge t) \geq M) +
                \mathbb{P} (\tau_n < t)\\
         & \leq \frac{1}{M} \mathbb{E}(X(\sigma_M \wedge \tau_n \wedge t)) +
                \mathbb{P} (\tau_n < t)\\
         & \leq \frac{\kappa_{n,t}}{M} +
                \mathbb{P} (\tau_n < t).\\
    \end{split}
  \end{displaymath}
 Thus, for any fixed $n$ and $t$
  \begin{displaymath}
    \lim_{M \rightarrow \infty}\mathbb{P}(\sigma_M < t) \leq \mathbb{P}(\tau_n < t).
  \end{displaymath}
 However, given the assumptions on $\tau_n$, we have that
  \begin{displaymath}
    \lim_{n \rightarrow \infty}\mathbb{P}(\tau_n < t) =0,
  \end{displaymath}
  which shows that $X(t,\omega)<\infty$ a.s. for $\omega\in\Omega.$  To determine the set $\tilde{\Omega}$ in
  \ref{eq:noBLOWUPBABY} and complete the proof, we observe that $X$ is an increasing function of $t$ and call, for each
  $j\in\mathbb{N},\Omega_j$ the set of full measure such that $X(j,\omega) <\infty,\enspace\forall\omega\in\Omega_j.$
Then $X(t,\omega) <\infty$ for every $t,0\leq t\leq j,$ and we can take for $\tilde{\Omega}$, the intersection
$\cap_{j\geq 1}\Omega_j$ which is a set of full measure as well.
\end{proof}

\section*{Acknowledgments}
This work was partially supported by the National Science Foundation under the grants
DMS-0604235, DMS-0906440, and DMS- 1004638
and by the Research Fund of Indiana University.

\bibliographystyle{plain}
\bibliography{ref_teressa}

\end{document}